\newtheorem{theorem}{Theorem}[section]
\newtheorem{lemma}[theorem]{Lemma}
\newtheorem{definition}{Definition}[section]
\newtheorem{proposition}[theorem]{Proposition}
\numberwithin{equation}{section}
\title{Multiple Hermite polynomials and simultaneous Gaussian quadrature\thanks{%
Work supported EOS project 30889451 and FWO project G.0864.16N
}}
\author{Walter Van Assche\footnotemark[2]
        \and Anton Vuerinckx}
\begin{document}

\maketitle

\renewcommand{\thefootnote}{\fnsymbol{footnote}}

\footnotetext[2]{Department of Mathematics, KU Leuven, Celestijnenlaan 200B box 2400, BE-3001 Leuven, Belgium.}

\begin{abstract}
Multiple Hermite polynomials are an extension of the classical Hermite polynomials for which orthogonality conditions are imposed with respect to $r>1$ normal (Gaussian) weights $w_j(x)=e^{-x^2+c_jx}$ with different means $c_j/2$, $1 \leq j \leq r$. These polynomials have a number of properties, such as a Rodrigues formula,  recurrence relations (connecting polynomials with nearest neighbor multi-indices), a differential equation, etc. 
The asymptotic distribution of the (scaled) zeros is investigated and an interesting new feature happens: depending on the distance between the 
$c_j$, $1 \leq j \leq r$, the zeros may accumulate on $s$ disjoint intervals, where $1 \leq s \leq r$. 
We will use the zeros of these multiple Hermite polynomials to approximate integrals of the form $\displaystyle \int_{-\infty}^{\infty} f(x) \exp(-x^2 + c_jx)\, dx$ simultaneously for $1 \leq j \leq r$ for the case $r=3$ and the situation when the zeros accumulate on three disjoint intervals.
We also give some properties of the corresponding quadrature weights.
\end{abstract}



\section{Simultaneous Gaussian quadrature} 

Let $w_1,\ldots,w_r$ be $r \geq 1$ weight functions on $\mathbb{R}$ and $f: \mathbb{R} \to \mathbb{R}$.
Simultaneous quadrature is a numerical method to approximate $r$ integrals
\[    \int_\mathbb{R} f(x) w_j(x)\, dx, \qquad 1 \leq j \leq r  \]
by sums
\[     \sum_{k=1}^N \lambda_{k,N}^{(j)} f(x_{k,N})         \]
at the same $N$ points $\{x_{k,N}, 1 \leq k \leq N\}$ but with weights $\{\lambda_{k,N}^{(j)}, 1 \leq k \leq N\}$ which depend on $j$.
This was described by Borges \cite{Borges} in 1994 but was originally suggested by Aurel Angelescu \cite{Angel} in 1918, whose work seems
to have gone unnoticed. The past few decades it became clear that this is closely related to multiple orthogonal polynomials, in a similar
way as Gauss quadrature is related to orthogonal polynomials. The motivation in \cite{Borges} was involving a color signal $f$, 
which can be transmitted using three colors: red-green-blue (RGB).
For this we need the amount of R-G-B in $f$, given by
\[   \int_{-\infty}^\infty f(x) w_R(x)\, dx, \quad  \int_{-\infty}^\infty f(x) w_G(x)\, dx,  \quad
   \int_{-\infty}^\infty f(x) w_B(x)\, dx.   \]
A natural question is whether this can be done with $N$ function evaluations and maximum degree of accuracy.
If we choose $n$ points for each integral, and then use Gaussian quadrature, then this would require $3n$ function evaluations
for a degree of accuracy $2n-1$. A better choice is to choose the zeros of the multiple orthogonal polynomials $P_{n,n,n}$ 
for the weights $(w_R,w_G,w_B)$ and then use interpolatory quadrature. This again requires $3n$ function evaluations, but the degree of accuracy 
is increased to $4n-1$. We will call this method based on zeros of multiple orthogonal polynomials the simultaneous Gaussian quadrature method.
Some interesting research problems for simultaneous Gaussian quadrature are
\begin{itemize}
  \item To find the multiple orthogonal polynomials when the weights $w_1,\ldots,w_r$ are given.
  \item To study the location and computation of the zeros of the multiple orthogonal polynomials.
  \item To study the behavior and computation of the weights $\lambda_{k,N}^{(j)}$. 
  \item To investigate the convergence of the quadrature rules.
\end{itemize}
Part of this research has already been started in \cite{CousWVA}, \cite{FPIllanLOP}, \cite{LubWVA}, \cite{MilStan}, \cite{WVA}, but
there is still a lot to be done in this field. 

\section{Multiple orthogonal polynomials} \

\begin{definition}
Let $\mu_1,\ldots,\mu_r$ be $r$ positive measures on $\mathbb{R}$ and let $\vec{n}=(n_1,\ldots,n_r)$ be a multi-index in $\mathbb{N}^r$. The (type II) multiple orthogonal
polynomial $P_{\vec{n}}$ is the monic polynomial of degree $|\vec{n}|=n_1+n_2+\cdots+n_r$
that satisfies the orthogonality conditions
\[    \int x^k P_{\vec{n}}(x) \, d\mu_j(x)  = 0, \qquad 0 \leq k \leq n_j-1 ,  \]
for $1 \leq j \leq r$.
\end{definition}

Such a monic polynomial may not exist, or may not be unique. One needs conditions on the (moments of) the
measures $(\mu_1,\ldots,\mu_r)$. Two important cases have been introduced for which all the multiple orthogonal polynomials
exist and are unique.
The measures $(\mu_1,\ldots,\mu_r)$ are an \textit{Angelesco system} if $\textup{supp}(\mu_j) \subset \Delta_j$, where the $\Delta_j$ are intervals which are pairwise disjoint: $\Delta_i \cap \Delta_j = \emptyset$ whenever $i \neq j$.

\begin{theorem}[Angelesco]
For an Angelesco system the multiple orthogonal polynomials exist for every multi-index $\vec{n}$. Furthermore
$P_{\vec{n}}$ has $n_j$ simple zeros in each interval $\Delta_j$.
\end{theorem}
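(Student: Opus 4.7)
The plan is to deduce all three assertions (existence, uniqueness, and zero location) from a single sign-change lemma: if a polynomial $Q$ satisfies $\int x^k Q(x)\, d\mu_j(x)=0$ for $0 \leq k \leq n_j-1$, then $Q$ changes sign at least $n_j$ times inside $\Delta_j$. To prove the lemma, let $t_1<\cdots<t_m$ be the sign changes of $Q$ in $\Delta_j$, and suppose toward contradiction that $m<n_j$. Setting $R(x)=\prod_{i=1}^m(x-t_i)$, the polynomial $Q(x)R(x)$ has constant sign on $\Delta_j$ and is not identically zero on $\mathrm{supp}(\mu_j)$, so $\int Q R\, d\mu_j \neq 0$; but $\deg R = m < n_j$, and expanding $R$ in monomials the orthogonality hypothesis forces this integral to vanish, a contradiction.

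With the lemma in hand, uniqueness is immediate. If two monic polynomials $P$ and $\widetilde{P}$ of degree $|\vec{n}|$ both satisfy the orthogonality relations, then $D=P-\widetilde{P}$ has degree strictly less than $|\vec{n}|$ and still satisfies all the conditions. Applying the lemma in each $\Delta_j$, $D$ has at least $n_j$ sign changes in $\Delta_j$; summing over $j$ and using the \emph{disjointness} of the intervals, $D$ has at least $\sum_j n_j = |\vec{n}|$ distinct real zeros, forcing $D\equiv 0$.

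For existence, I would observe that, writing $P_{\vec{n}}(x)=x^{|\vec{n}|}+\sum_{k=0}^{|\vec{n}|-1}a_k x^k$, the orthogonality relations are $|\vec{n}|$ linear equations in the $|\vec{n}|$ unknown coefficients $a_k$. The corresponding homogeneous system asks for a polynomial of degree at most $|\vec{n}|-1$ satisfying the orthogonality conditions; the same sign-change count used for uniqueness shows the only such polynomial is zero. Hence the coefficient matrix is invertible and the monic solution $P_{\vec{n}}$ exists.

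Finally, the zero-location statement follows by applying the lemma to $P_{\vec{n}}$ itself: it has at least $n_j$ sign changes in each $\Delta_j$, so at least $|\vec{n}|$ real sign changes in total, which exhausts its $|\vec{n}|$ zeros. Therefore $P_{\vec{n}}$ has exactly $n_j$ simple zeros in $\Delta_j$ and no zeros outside $\bigcup_j \Delta_j$. The only delicate point is the sign-change lemma; everything else is bookkeeping, and the crucial structural input is the disjointness of the $\Delta_j$, which alone allows zero counts to be added across intervals—a step that fails for systems with overlapping supports and is precisely what singles out the Angelesco setting.
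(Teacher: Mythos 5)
Your argument is correct and is the standard proof of Angelescu's theorem; the paper itself gives no proof of this statement, referring instead to Nikishin--Sorokin and Ismail, so there is nothing in-paper to diverge from. Your sign-change lemma is exactly the technique the authors do use later, in the proof of Proposition \ref{prop:3.1}, where it must be supplemented by infinite-finite range inequalities because the supports there are not compact. The only point worth making explicit is the standing assumption that each $\mu_j$ has infinitely many points in its support (so that $\int QR\,d\mu_j\neq 0$ for a nonnegative, not identically vanishing polynomial $QR$); with that, the lemma, the invertibility of the coefficient matrix, and the zero count all go through as you state.
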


For a proof, see \cite[Ch.~4, Prop. 3.3]{NikiSor} or \cite[Thm. 23.1.3]{Ismail}.
The behavior of the quadrature weights for simultaneous Gaussian quadrature is known for this case 
(see \cite[Ch.~4 4, Prop. 3.5]{NikiSor}, \cite[Thm. 1.1]{LubWVA}).

\begin{theorem}
The quadrature weights $\lambda_{k,n}^{(j)}$ are positive for the $n_j$ zeros on $\Delta_j$. The remaining
quadrature weights have alternating sign, with those for the zeros closest to the interval $\Delta_j$ positive.
\end{theorem}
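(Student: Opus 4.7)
The plan is to exploit two standard features of simultaneous Gaussian quadrature built from the zeros $y_1<\cdots<y_N$ of $P_{\vec n}$ (with $N=|\vec n|$). First, the rule with nodes $\{y_m\}$ is exact against $\mu_j$ for every polynomial of degree at most $N+n_j-1$: any such polynomial can be written as $qP_{\vec n}+r$ with $\deg q\leq n_j-1$ and $\deg r\leq N-1$, and orthogonality of $P_{\vec n}$ to polynomials of degree $<n_j$ against $\mu_j$ together with interpolatory exactness on $r$ finishes the job. Second, since $P_{\vec n}$ has exactly $n_j$ simple zeros in $\Delta_j$, we may factor $P_{\vec n}=Q_j\tilde P$ with $Q_j$ the monic polynomial of degree $n_j$ whose roots are those in $\Delta_j$; because the $\Delta_l$ are pairwise disjoint intervals, $\tilde P$ keeps a constant sign throughout $\Delta_j$.

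Thanks to exactness, for any polynomial $g$ of degree $\leq n_j$ with $g(y_m)=1$, the product $\ell_m g$ (where $\ell_m$ is the Lagrange cardinal polynomial at $y_m$) still takes the Kronecker values at the nodes and lies in the exactness range, so $\lambda_m^{(j)}=\int \ell_m g\,d\mu_j$. The whole argument reduces to selecting $g$ that makes $\ell_m g$ of tractable sign on $\Delta_j$. If $y_m\in\Delta_j$, take $g(x)=Q_j(x)/[(x-y_m)Q_j'(y_m)]$, of degree $n_j-1$; using $P_{\vec n}'(y_m)=Q_j'(y_m)\tilde P(y_m)$ one finds
\[ \ell_m(x)\,g(x)=\left(\frac{Q_j(x)}{x-y_m}\right)^{2}\frac{1}{Q_j'(y_m)^2}\cdot\frac{\tilde P(x)}{\tilde P(y_m)}, \]
which is nonnegative on $\Delta_j$ (a square times a ratio of like-signed values of $\tilde P$), yielding $\lambda_m^{(j)}>0$. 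If instead $y_m\in\Delta_i$ with $i\neq j$, take $g(x)=Q_j(x)/Q_j(y_m)$, of degree $n_j$, which gives
\[ \ell_m(x)\,g(x)=\frac{Q_j(x)^2\,\tilde P(x)}{(x-y_m)\,P_{\vec n}'(y_m)\,Q_j(y_m)}; \]
the sign on $\Delta_j$ is now fixed, because $Q_j^{2}\geq 0$ while each of $\tilde P(x)$ and $x-y_m$ keeps a fixed sign on $\Delta_j$ thanks to $\Delta_i\cap\Delta_j=\emptyset$.

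The final step is a parity count to see that the nonzero signs in the second case alternate. Labelling the intervals $\Delta_1<\cdots<\Delta_r$ and letting $y_m$ be the $k$-th zero of $P_{\vec n}$ in $\Delta_i$, counting the negative factors contributed by $\tilde P(x)$ (for $x\in\Delta_j$), $x-y_m$, $P_{\vec n}'(y_m)$ and $Q_j(y_m)$ yields
\[ \mathrm{sign}\,\lambda_m^{(j)}=\begin{cases}(-1)^{\,k+n_i+n_{i+1}+\cdots+n_{j-1}}, & i<j,\\[2pt](-1)^{\,k+1+n_{j+1}+n_{j+2}+\cdots+n_{i-1}}, & i>j.\end{cases} \]
Each formula flips when $k$ advances by one inside $\Delta_i$ and glues across the boundary $\Delta_i\to\Delta_{i+1}$ (both $\neq j$) so that consecutive zeros of $P_{\vec n}$ outside $\Delta_j$ carry opposite signs. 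Specialising to $(i,k)=(j-1,n_{j-1})$ and to $(j+1,1)$ gives a positive sign, which pinpoints the zeros immediately to the left and to the right of $\Delta_j$. The only real obstacle is bookkeeping in this parity count across all intervals simultaneously; the conceptual content is the sign-controlling choice of the auxiliary polynomial $g$.
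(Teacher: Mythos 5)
Your argument is correct. A preliminary remark: the paper does not actually prove this theorem---it is quoted from Nikishin--Sorokin and from Lubinsky--Van Assche---so there is no in-paper proof to match line by line. Your proof is the standard one, and it is essentially the same mechanism the paper deploys later for the multiple Hermite analogue (Lemmas \ref{lem:5.1}--\ref{lem:5.3} and Theorem \ref{thm:5.4}): the identity $\lambda_m^{(j)}=\int \ell_m g\,d\mu_j$ for a polynomial $g$ of degree $\leq n_j$ with $g(y_m)=1$ is exactly what underlies the paper's choice of test functions $p_n^2q_nr_n/(x-x_k)^2$ (your case $y_m\in\Delta_j$, where $\ell_m g$ becomes a square times a single-signed factor) and $\pi_{n-1}p_n^2r_n$ (your case $y_m\in\Delta_i$, $i\neq j$). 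The difference is that in the genuine Angelesco setting $\mu_j$ is supported on $\Delta_j$ alone, so the sign of the integrand on $\Delta_j$ settles everything at once, whereas for the Hermite weights, supported on all of $\mathbb{R}$, the paper must invoke infinite--finite range inequalities to show the tail cannot flip the sign; your argument is correspondingly cleaner. Your parity count checks out: the contributions $\mathrm{sign}\,\tilde P|_{\Delta_j}=(-1)^{\sum_{l>j}n_l}$, $\mathrm{sign}\,P_{\vec n}'(y_m)=(-1)^{\,n_i-k+\sum_{l>i}n_l}$, $\mathrm{sign}\,Q_j(y_m)$ equal to $(-1)^{n_j}$ or $+1$, and $\mathrm{sign}(x-y_m)|_{\Delta_j}$ equal to $+1$ or $-1$ according as $i<j$ or $i>j$, combine to your two formulas, which alternate along consecutive zeros on each side of $\Delta_j$ and give $+$ at $(i,k)=(j-1,n_{j-1})$ and $(j+1,1)$. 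Two small points worth making explicit: strict positivity (and nonvanishing in the alternating case) uses that $\mu_j$ has infinite support, so a single-signed, not identically zero polynomial has nonzero integral; and the alternation should be read separately on the left and right of $\Delta_j$, which is what your case split delivers.
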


Another important case is when the measures form an AT-system.
The weight functions $(w_1,\ldots,w_r)$ are an \textit{algebraic Chebyshev system} (AT-system) on $[a,b]$ if
\begin{multline*}   w_1,xw_1,x^2w_1,\ldots,x^{n_1-1} w_1, \ w_2, xw_2, x^2 w^2, \ldots, x^{n_2-1} w_2, \ldots, \\
   w_r, xw_r, x^2w_r, \ldots, x^{n_r-1} w_r  
\end{multline*}
are a Chebyshev system on $[a,b]$ for every $(n_1,\ldots,n_r) \in \mathbb{N}^r$.

\begin{theorem}
For an AT-system the multiple orthogonal polynomials exist for every multi-index $(n_1,\ldots,n_r)$.
Furthermore $P_{\vec{n}}$ has $|\vec{n}|$ simple zeros on the interval $[a,b]$.
\end{theorem}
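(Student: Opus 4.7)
The plan is to derive both existence-uniqueness and the zero-counting from a single auxiliary lemma that converts the AT-hypothesis into information on sign changes. I would first state and prove the following: if $Q$ is a nonzero polynomial with $\deg Q \leq |\vec{n}|$ satisfying $\int_a^b x^k Q(x) w_j(x)\,dx = 0$ for $0 \leq k \leq n_j - 1$ and $1 \leq j \leq r$, then $Q$ changes sign at least $|\vec{n}|$ times in $(a,b)$. Assume for contradiction that $Q$ has sign changes only at $x_1 < \cdots < x_m$ with $m < |\vec{n}|$. Because the family $\{x^k w_j : 0 \leq k \leq n_j - 1,\ 1 \leq j \leq r\}$ is by hypothesis a Chebyshev system of dimension $|\vec{n}|$, a standard property of Chebyshev systems produces a nontrivial function
\[
  A(x) \;=\; \sum_{j=1}^{r} A_j(x)\, w_j(x), \qquad \deg A_j \leq n_j - 1,
\]
whose sign changes in $(a,b)$ occur exactly at $x_1,\ldots,x_m$. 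Then $Q \cdot A$ has constant sign on $(a,b)$ and is not identically zero, so $\int_a^b Q(x) A(x)\,dx \neq 0$. Expanding this integral yields $\sum_{j=1}^r \int_a^b A_j(x) Q(x) w_j(x)\,dx$, and each term vanishes by the orthogonality hypothesis on $Q$, since $\deg A_j \leq n_j - 1$. This is the desired contradiction.

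Granted this lemma, both conclusions follow quickly. Writing $P_{\vec{n}}(x) = x^{|\vec{n}|} + \sum_{l=0}^{|\vec{n}|-1} a_l x^l$, the orthogonality relations amount to $|\vec{n}|$ linear equations in the $|\vec{n}|$ unknown coefficients $a_l$; unique solvability is equivalent to triviality of the kernel, i.e., to the statement that no nonzero polynomial of degree $\leq |\vec{n}| - 1$ satisfies all the orthogonality conditions. The lemma forces any such polynomial to have at least $|\vec{n}|$ sign changes, which is impossible, so $P_{\vec{n}}$ exists and is unique. Applying the lemma now to $P_{\vec{n}}$ itself produces at least $|\vec{n}|$ sign changes in $(a,b)$; combined with $\deg P_{\vec{n}} = |\vec{n}|$, this accounts for all of its zeros, which are therefore simple and all contained in $(a,b)$.

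The main obstacle is the Chebyshev-system step that manufactures $A$ with sign changes at a prescribed set of $m \leq |\vec{n}| - 1$ interior points. This uses the full AT-hypothesis (stronger than the mere bound of $|\vec{n}|-1$ on the number of zeros of a nontrivial element), and I would import it from standard references on Chebyshev and Markov systems (for example Karlin--Studden) rather than reprove it here. Everything else is bookkeeping: matching the degrees of the $A_j$ to the orthogonality conditions and checking that a sign-preserving product has a nonzero integral.
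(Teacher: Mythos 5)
Your proposal is correct and is essentially the standard argument: the paper itself gives no proof here but defers to \cite[Ch.~4, Corr.\ of Thm.\ 4.3]{NikiSor} and \cite[Thm.\ 23.1.4]{Ismail}, and the proof found there is exactly your sign-change lemma, using the AT-hypothesis to build a combination $A=\sum_j A_j w_j$ with prescribed sign changes, so that $\int_a^b Q(x)A(x)\,dx$ is simultaneously nonzero and forced to vanish by orthogonality. Your deduction of existence/uniqueness from the triviality of the kernel of the $|\vec{n}|\times|\vec{n}|$ linear system, and of the zero count from applying the lemma to $P_{\vec{n}}$ itself, matches that reference argument.
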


For a proof, see \cite[Ch.~4, Corr. of Thm. 4.3]{NikiSor} or \cite[Thm. 23.1.4]{Ismail}.

\section{Multiple Hermite polynomials}
We will consider the weight functions
\[     w_j(x) = e^{-x^2+c_jx}, \qquad x \in \mathbb{R}, \]
with real parameters $c_1,\ldots,c_j$ such that $c_i \neq c_j$ whenever $i \neq j$.
These weights are proportional to normal weights with means at $c_j/2$ and variance $\sigma^2 = \frac12$.
They form an AT-system and the corresponding multiple orthogonal polynomials are known as \textit{multiple Hermite polynomials} $H_{\vec{n}}$.
They can be obtained by using the Rodrigues formula
\[  e^{-x^2}  H_{\vec{n}}(x) = (-1)^{|\vec{n}|} 2^{-|\vec{n}|} 
     \left( \prod_{j=1}^r  e^{-c_jx} \frac{d^{n_j}}{dx^{n_j}} e^{c_jx} \right)  e^{-x^2}  ,  \] 
from which one can find the explicit expression
\begin{equation*}   H_{\vec{n}}(x) = (-1)^{|\vec{n}|} 2^{-|\vec{n}|} 
             \sum_{k_1=0}^{n_1} \cdots \sum_{k_r=0}^{n_r} \binom{n_1}{k_1} \cdots \binom{n_r}{k_r}
      c_1^{n_1-k_1} \cdots c_r^{n_r-k_r} (-1)^{|\vec{k}|} H_{|\vec{k}|}(x) .
\end{equation*}
See \cite[\S 23.5]{Ismail}.
Multiple orthogonal polynomials satisfy a system of recurrence relations connecting the nearest neighbors. For multiple Hermite polynomials
one has 
\[    xH_{\vec{n}}(x) = H_{\vec{n}+\vec{e}_k}(x) + \frac{c_k}2 H_{\vec{n}}(x) + \frac12 \sum_{j=1}^r n_j H_{\vec{n}-\vec{e}_j}(x), 
 \qquad  1 \leq k \leq r,  \]
where $\vec{e}_1=(1,0,0,\ldots,0), \vec{e}_2 = (0,1,0,\ldots,0), \ldots, \vec{e}_r = (0,0,\ldots,0,1)$. 
They also have interesting differential relations, such as $r$ raising operators
\[     \left( e^{-x^2+c_jx} H_{\vec{n}-\vec{e}_j}(x) \right)' = -2 e^{-x^2+c_jx} H_{\vec{n}}(x), \qquad 1 \leq j \leq r.  \]
and a lowering operator
\[     H_{\vec{n}}'(x) = \sum_{j=1}^r n_j H_{\vec{n}-\vec{e}_j}(x) .  \] 
See \cite[Eqs. (23.8.5)--(23.8.6)]{Ismail}.
Combining these raising operators and the lowering operator gives a differential equation of order $r+1$
\[     \left(\prod_{j=1}^r D_j \right) D H_{\vec{n}}(x) = -2 \left( \sum_{j=1}^r n_j \prod_{i \neq j} D_j \right) H_{\vec{n}}(x), \]
where
\[    D = \frac{d}{dx}, \qquad   D_j = e^{x^2-c_jx} D e^{-x^2+c_jx}  = D + (-2x+c_j) I .  \]

From now on we deal with the case $r=3$ and weights $c_1=-c, c_2=0, c_3=c$:
\[    w_1(x) = e^{-x^2-cx}, \quad  w_2(x) = e^{-x^2}, \quad    w_3(x) = e^{-x^2+cx} . \]

\subsection{Zeros}
Let $x_{1,3n} < \ldots < x_{3n,3n}$ be the zeros of $H_{n,n,n}$. 
First we will show that, for $c$ large enough, the zeros of $H_{n,n,n}$ lie on three disjoint intervals around $-c/2$, $0$ and $c/2$.

\begin{proposition}  \label{prop:3.1}
For $c$ sufficiently large (e.g., $c > 4\sqrt{4n+1}$) the zeros of $H_{n,n,n}$ are on three disjoint intervals $I_1 \cup I_2 \cup I_3$,
where
\[  I_1 = [ -\frac{c}2 - \sqrt{4n+1}, -\frac{c}2 + \sqrt{4n+1}], \quad I_2 =  [- \sqrt{4n+1}, \sqrt{4n+1}], \]
\[  I_3 =  [\frac{c}2 - \sqrt{4n+1},\frac{c}2 + \sqrt{4n+1}], \]
and each interval contains $n$ simple zeros.
\end{proposition}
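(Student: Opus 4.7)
The plan is to combine the AT-system theorem quoted earlier with a classical sign-change argument adapted to weights supported on all of $\mathbb{R}$. By the AT-system theorem, $H_{n,n,n}$ has exactly $3n$ simple real zeros. The threshold $c>4\sqrt{4n+1}$ is precisely what makes the three intervals $I_1,I_2,I_3$ pairwise disjoint, so it suffices to show that each $I_j$ contains at least $n$ zeros; the total count of $3n$ then forces exactly $n$ zeros in each interval and excludes any zero from lying outside $I_1\cup I_2\cup I_3$.

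For fixed $j$, I would argue by contradiction: suppose $H_{n,n,n}$ has only $m\le n-1$ sign changes in $I_j$, at points $y_1<\cdots<y_m$, and set $q(x)=\prod_{i=1}^m(x-y_i)$, a polynomial of degree at most $n-1$. By construction $q(x)H_{n,n,n}(x)$ has constant sign on $I_j$, since the factors $(x-y_i)$ absorb every sign change of $H_{n,n,n}$ inside $I_j$. The orthogonality condition $\int_{\mathbb{R}}q\,H_{n,n,n}\,w_j\,dx=0$ then gives
\[
\int_{I_j} q\,H_{n,n,n}\,w_j\,dx \;=\; -\int_{\mathbb{R}\setminus I_j} q\,H_{n,n,n}\,w_j\,dx .
\]
The left-hand side is strictly nonzero with a definite sign, so a contradiction will follow once the right-hand side is shown to be strictly smaller in absolute value.

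The main obstacle is this tail estimate, and it is where the precise threshold $c>4\sqrt{4n+1}$ must enter. The lever is the Gaussian concentration $w_j(x)=e^{c_j^2/4}\,e^{-(x-c_j/2)^2}$: on the boundary of $I_j$ the factor $e^{-(x-c_j/2)^2}$ already equals $e^{-(4n+1)}$ and decays Gaussianly further out, while the polynomial $q\,H_{n,n,n}$ has degree at most $4n-1$. I would bound the exterior integral via Cauchy--Schwarz against the norming constant $\int H_{n,n,n}^2\,w_j\,dx$ and the Gaussian tail $\int_{\mathbb{R}\setminus I_j} q^2\,w_j\,dx$ (which reduces to shifted Hermite moments of order $e^{-(4n+1)}$), and bound the interior piece from below in the complementary way. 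A more structural alternative starts from the factored Rodrigues representation $H_{n,n,n}(x)=(-1)^{3n}\,2^{-3n}\,e^{x^2}\,D^n(D^2-c^2)^n e^{-x^2}$: expanding $(D^2-c^2)^n$ binomially shows that, on any fixed compact set, $H_{n,n,n}(x)=(-c^2/4)^n\,\pi_n(x)\bigl(1+O(c^{-2})\bigr)$, where $\pi_n$ is the monic classical Hermite polynomial of degree $n$ whose zeros lie in $(-\sqrt{2n+1},\sqrt{2n+1})\subset I_2$, so by Hurwitz $n$ zeros of $H_{n,n,n}$ cluster near those of $\pi_n$ inside $I_2$ once $c$ is large enough; the analogous local expansion near $\pm c/2$ locates the remaining $2n$ zeros. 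The genuinely technical step in either route is the quantitative bookkeeping that turns these soft estimates into the explicit bound $c>4\sqrt{4n+1}$.
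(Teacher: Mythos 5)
Your overall strategy is the same as the paper's: count sign changes of $H_{n,n,n}$ in $I_j$, multiply by the polynomial $q$ that absorbs them, invoke the orthogonality $\int q\,H_{n,n,n}\,w_j\,dx=0$, and derive a contradiction by showing the exterior integral cannot cancel the interior one. But the one step you flag as ``the main obstacle'' --- the tail estimate --- is exactly the step you do not carry out, and it is the entire analytic content of the proposition. The paper closes it in one line by citing the infinite--finite range (restricted range) inequality of Levin and Lubinsky \cite[Ch.~4, Thm.~4.1]{LL} with $Q(x)=x^2-c_jx$, $p=1$ and $t=4n+1$: the Mhaskar--Rakhmanov--Saff interval $\Delta_t$ for this weight is precisely $I_j=[c_j/2-\sqrt{4n+1},\,c_j/2+\sqrt{4n+1}]$ (this is where the number $\sqrt{4n+1}$ comes from), and since $\deg\bigl(q\,H_{n,n,n}\bigr)\leq 4n-1<t$, the inequality
\[
\int_{\mathbb{R}\setminus I_j}\lvert q\,H_{n,n,n}\rvert\,w_j\,dx \;<\; \int_{I_j}\lvert q\,H_{n,n,n}\rvert\,w_j\,dx
\]
holds outright, for \emph{every} $c$. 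Your proposed Cauchy--Schwarz substitute is not a routine computation: splitting off the norming constant $\int H_{n,n,n}^2 w_j\,dx$ does not obviously compare against a lower bound for the interior integral, and making it work would amount to re-proving the restricted range inequality. Your second route (Rodrigues expansion plus Hurwitz) is qualitatively plausible but inherently asymptotic in $c$ for fixed $n$; it cannot produce the explicit threshold and the uniformity of the error term over the relevant compact sets (the derivatives $D^{2k}e^{-x^2}$ grow with $n$) is not addressed.

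A second, smaller point: you misplace the role of the hypothesis $c>4\sqrt{4n+1}$. In the paper's argument this condition is used \emph{only} to make $I_1,I_2,I_3$ pairwise disjoint (so that the three counts of $n$ sign changes add up to $3n$ distinct zeros); it plays no role whatsoever in the tail estimate, which is a statement about a single weight and its own interval $\Delta_{4n+1}$, valid for all $c$. So the ``quantitative bookkeeping that turns these soft estimates into the explicit bound'' that you defer is not actually where the bound comes from. To repair your write-up, replace the two sketched tail arguments by the citation to the infinite--finite range inequality (or an honest proof of it), and note separately that disjointness of the intervals is equivalent to $c/2-\sqrt{4n+1}>\sqrt{4n+1}$, i.e.\ $c>4\sqrt{4n+1}$.
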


\begin{proof}
Suppose $x_1,x_2,\ldots,x_m$ are the sign changes of $H_{n,n,n}$ on $I_3$ and that $m<n$. Let $\pi_m(x)=(x-x_1)(x-x_2)\cdots(x-x_m)$, then
$H_{n,n,n}(x)\pi_m(x)$ does not change sign on $I_3$. By the multiple orthogonality one has
\begin{equation}  \label{Hq0}
    \int_{-\infty}^\infty H_{n,n,n}(x)\pi_m(x) e^{-x^2+cx}\, dx = 0.
\end{equation}
Suppose that $H_{n,n,n}(x)\pi_m(x)$ is positive on $I_3$, then by the infinite-finite range inequalities 
(see, e.g., \cite[Ch. 4, Thm. 4.1]{LL}, where we take $Q(x)=x^2-cx$, $p=1$, $t=4n+1$, so that $\Delta_t = I_3$) one has
\[   \int_{\mathbb{R} \setminus I_3} |H_{n,n,n}(x)\pi_m(x)| e^{-x^2+cx}\, dx < \int_{I_3} H_{n,n,n}(x)\pi_m(x) e^{-x^2+cx}\, dx, \]
so that
\[   \int_{\mathbb{R}\setminus I_3} H_{n,n,n}(x)\pi_m(x) e^{-x^2+cx}\, dx > - \int_{I_3} H_{n,n,n}(x)\pi_m(x) e^{-x^2+cx}\, dx  \]
and
\begin{multline*}   \int_{-\infty}^\infty H_{n,n,n}(x)\pi_m(x) e^{-x^2+cx}\, dx  \\
   = \int_{I_3} H_{n,n,n}(x)\pi_m(x) e^{-x^2+cx}\, dx
     + \int_{\mathbb{R}\setminus I_3} H_{n,n,n}(x)\pi_m(x) e^{-x^2+cx}\, dx  > 0,  
\end{multline*}
which is in contradiction with \eqref{Hq0}. This mean that our assumption that $m < n$ is false and hence $m \geq n$. We can repeat the
reasoning for $I_2$ and $I_1$, and since $H_{n,n,n,}$ is a polynomial of degree $3n$, we must conclude that each interval contains $n$
zeros of $H_{n,n,n}$, which are all simple. Clearly the three intervals are disjoint when $c > 4\sqrt{4n+1}$.
\end{proof}

This result shows that, for large $c$, the multiple Hermite polynomials behave very much like an Angelesco system, i.e., multiple orthogonal polynomials
for which the orthogonality conditions are on disjoint intervals. 
 Some results for simultaneous Gauss quadrature for Angelesco systems were
proved earlier in \cite[Ch. 4, Prop. 3.4 and 3.5]{NikiSor} and \cite{LubWVA}. In this paper we will show that similar results are true for
multiple Hermite polynomials when $c$ is large.

The intervals $I_1,I_2,I_3$ are in fact a bit too large, because they were obtained
by using the infinite-finite range inequalities for one weight only, and not for the three weights simultaneously.
In order to study the zeros in more detail, we will take the parameter $c$ proportional to $\sqrt{n}$ and scale the zeros by a factor $\sqrt{n}$ as well.
This amounts to investigating the polynomials $H_{n,n,n}(\sqrt{n} x)$ with $c = \sqrt{n} \hat{c} $. In order to find for which values of $c$
the zeros are accumulating on three disjoint intervals as $n \to \infty$, we will investigate the asymptotic distribution of the zeros.
Our main theorem in this section is

\begin{theorem}   \label{thm:3.2}
There exists a $c^*>0$ such that for the zeros of $H_{n,n,n}$ with $c=\sqrt{n}\hat{c}$ one has 
\[   \lim_{n \to \infty} \frac{1}{3n}  \sum_{j=1}^{3n} f\left( \frac{x_{j,3n}}{\sqrt{n}} \right) = \int f(x) v(x)\, dx  \]
where $v$ is a probability density supported on three intervals $[-b,-a] \cup [-d,d] \cup [a,b]$  $(0 < d < a < b)$ when $\hat{c} > c^*$
and $v$ is supported on one interval $[-b,b]$ when $\hat{c}<c^*$. The numerical value is $c^*=4.10938818$.
\end{theorem}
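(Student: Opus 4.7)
The plan is to combine a tightness argument with a potential-theoretic characterization of the limit. Set $\hat{x}_{j,3n}=x_{j,3n}/\sqrt{n}$, so the zeros of $p_n(x):=n^{-3n/2}H_{n,n,n}(\sqrt{n}x)$ are precisely the $\hat{x}_{j,3n}$, and let $\nu_n=\frac{1}{3n}\sum_{j=1}^{3n}\delta_{\hat{x}_{j,3n}}$ be their normalized counting measure. Under $x\mapsto\sqrt{n}x$ with $c=\sqrt{n}\hat{c}$ the weights become $\tilde{w}_j(x)=e^{-n(x^2-\hat{c}_j x)}$ with $\hat{c}_1=-\hat{c}$, $\hat{c}_2=0$, $\hat{c}_3=\hat{c}$, which is the canonical form $e^{-nV_j(x)}$ needed for a large-parameter analysis with external field. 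Proposition~\ref{prop:3.1} applied with $c=\sqrt{n}\hat{c}$ shows that all $\hat{x}_{j,3n}$ lie in a fixed compact set once $n$ is large enough, so $\{\nu_n\}$ is tight and it suffices to identify every weak-$*$ subsequential limit with a single measure $v$.

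The candidate limit is $v=\mu_1+\mu_2+\mu_3$, where $(\mu_1,\mu_2,\mu_3)$ is the minimizer, among triples of probability measures with $|\mu_j|=1/3$, of the Angelesco-type vector energy
\[
\mathcal{E}(\mu_1,\mu_2,\mu_3)=\sum_{j=1}^{3}I(\mu_j)+\sum_{1\le i<j\le 3}I(\mu_i,\mu_j)+\sum_{j=1}^{3}\int(x^2-\hat{c}_j x)\,d\mu_j(x),
\]
with $I(\mu)=\iint\log|x-y|^{-1}\,d\mu(x)d\mu(y)$ and $I(\mu,\nu)$ the mixed energy. Justifying this characterization is the central analytic step; I would carry it out via a nonlinear steepest-descent analysis of the $4\times 4$ Riemann--Hilbert problem for multiple Hermite polynomials (or, equivalently, a saddle-point analysis of the Rodrigues formula). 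Differentiating the Euler--Lagrange variational equalities and passing to Stieltjes transforms $g_j(z)=\int(z-t)^{-1}\,d\mu_j(t)$, the system collapses to a single algebraic spectral curve of degree four satisfied by $G=g_1+g_2+g_3$, with coefficients polynomial in $z$ and $\hat{c}$. The limiting density is then $v(x)=-\tfrac{1}{\pi}\operatorname{Im}G(x+i0)$ and is supported precisely where the quartic has non-real roots; the $x\mapsto -x$ symmetry, the mass-normalization $G(z)\sim 1/z$ at infinity, and square-root vanishing at the band edges together fix the endpoints.

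The phase transition reduces to counting the components of the set where the quartic has non-real roots: one interval for small $\hat{c}$, three for large $\hat{c}$. The critical $c^*$ is characterized by the inner band $[-d,d]$ and the outer band $[a,b]$ just touching, i.e.\ $a=d$; imposing this coincidence forces a double root in the discriminant at $x=\pm a$, and substituting into the endpoint/normalization relations collapses them to a single transcendental equation in $\hat{c}$ alone, whose numerical solution is $c^*\approx 4.10938818$. The hard part is the rigorous justification of the equilibrium characterization across the full range of $\hat{c}$: in the separated regime $\hat{c}>c^*$ the a~priori localization of Proposition~\ref{prop:3.1} makes an Angelesco reduction natural, but near and below $c^*$ this localization disappears, and one must rely on the AT-structure of the weights together with a uniqueness/minimax argument for $\mathcal{E}$ (or on a Riemann--Hilbert analysis adapted to the one-cut phase) to conclude that $\nu_n$ has the same unique weak-$*$ limit in both regimes and that the transition occurs exactly at the value of $\hat{c}$ determined above.
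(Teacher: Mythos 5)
Your outline is a coherent program, but as written it has a genuine gap at its center: the step that actually proves the theorem --- identifying every subsequential limit of the normalized zero-counting measures with a specific measure and deriving its spectral curve --- is only named (``a nonlinear steepest-descent analysis of the $4\times4$ Riemann--Hilbert problem'', ``a uniqueness/minimax argument for $\mathcal{E}$''), not carried out. You acknowledge yourself that the Angelesco/equilibrium characterization you propose is only natural in the separated regime, and that near and below $c^*$ you do not have an argument; but the existence of the phase transition at a single explicit $c^*$ is precisely the content of the theorem, so deferring the one-cut phase leaves the statement unproved. Two smaller points: your appeal to Proposition~\ref{prop:3.1} for tightness only applies when $c>4\sqrt{4n+1}$, i.e.\ $\hat{c}\gtrsim 8$ after scaling, so it does not cover $\hat{c}$ near $c^*$; and the critical equation is not transcendental --- it is the algebraic condition that the discriminant (in $z$) of a degree-six polynomial vanishes, which reduces to $\hat{c}^6-\tfrac{27}{2}\hat{c}^4-54\hat{c}^2-54=0$.

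The paper avoids all of this machinery. It takes the explicit fourth-order differential equation satisfied by $H_{n,n,n}$, substitutes $x=\sqrt{n}z$, $c=\sqrt{n}\hat{c}$ and the normalized logarithmic derivative $S_n(z)=n^{-1/2}H_{n,n,n}'(\sqrt{n}z)/H_{n,n,n}(\sqrt{n}z)$, extracts a convergent subsequence by Montel's theorem, and passes to the limit in the ODE to obtain the quartic spectral curve \eqref{EqS} directly --- with no equilibrium problem and no Riemann--Hilbert analysis. Since the limiting algebraic equation is subsequence-independent and only one of its four branches has the Stieltjes-transform behavior $3/z$ at infinity, the whole sequence converges; Stieltjes--Perron inversion then gives the density, whose support is where the discriminant \eqref{pol} of the quartic is negative, and $c^*$ is read off from the vanishing of the discriminant of that degree-six polynomial. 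Your equilibrium-problem picture is the right interpretation of the answer (the paper develops it in its Section~4, after the theorem is proved), and your characterization of $c^*$ as the bands touching at $\pm a=\pm d$ matches the double roots of \eqref{pol}; but to turn your proposal into a proof you would either need to execute the Riemann--Hilbert analysis in both phases, or adopt the ODE-plus-normal-families argument, which is both more elementary and uniform in $\hat{c}$.
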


Such a phase transition when the zeros cluster on one interval when the parameters are close together or on two intervals when the parameters
are far apart, was first observed and proved for $r=2$ by Bleher and Kuijlaars \cite{BleKuijl}.

\begin{figure}[ht]
\centering
\includegraphics[width=2.5in]{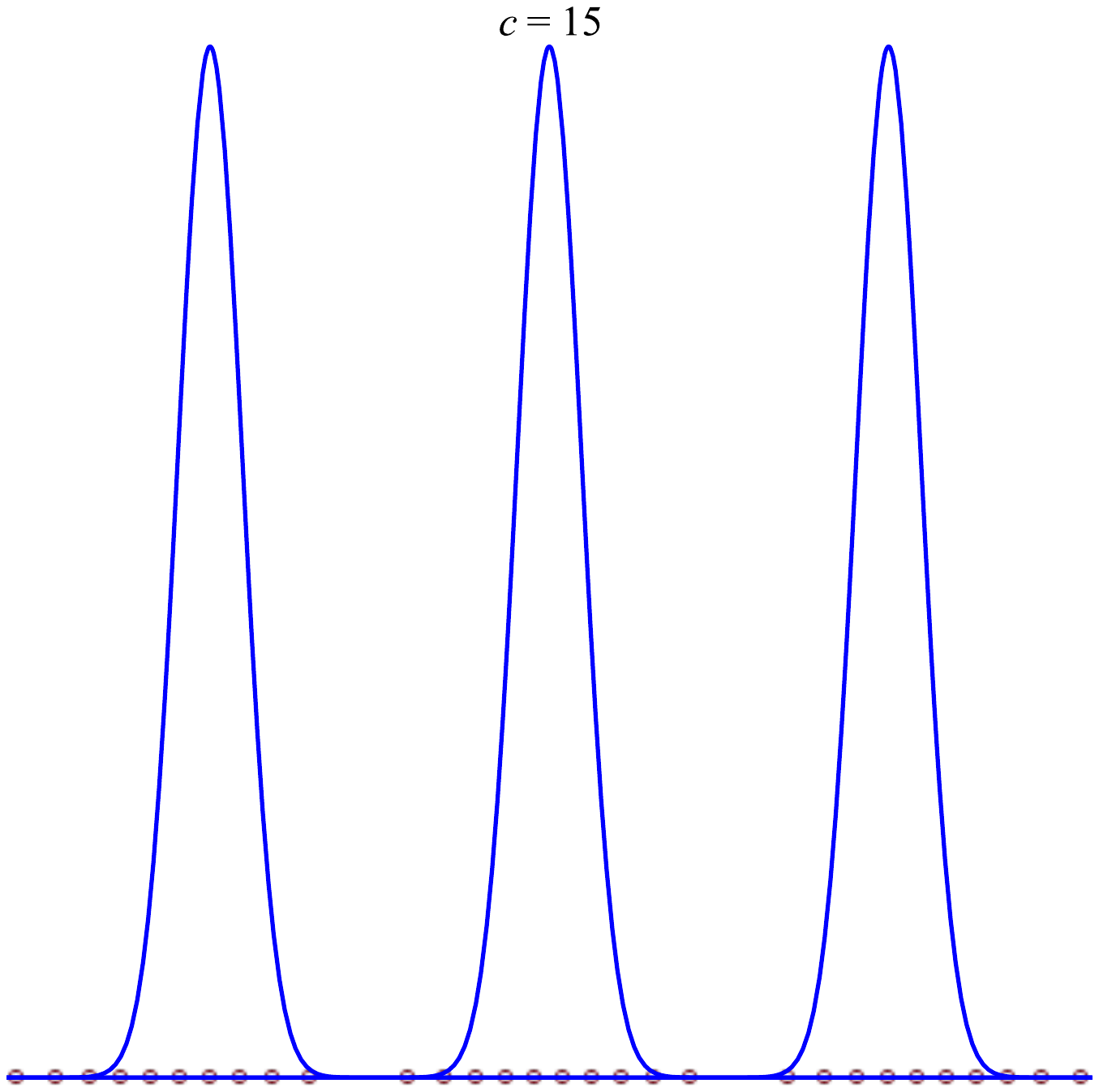}
\includegraphics[width=2.5in]{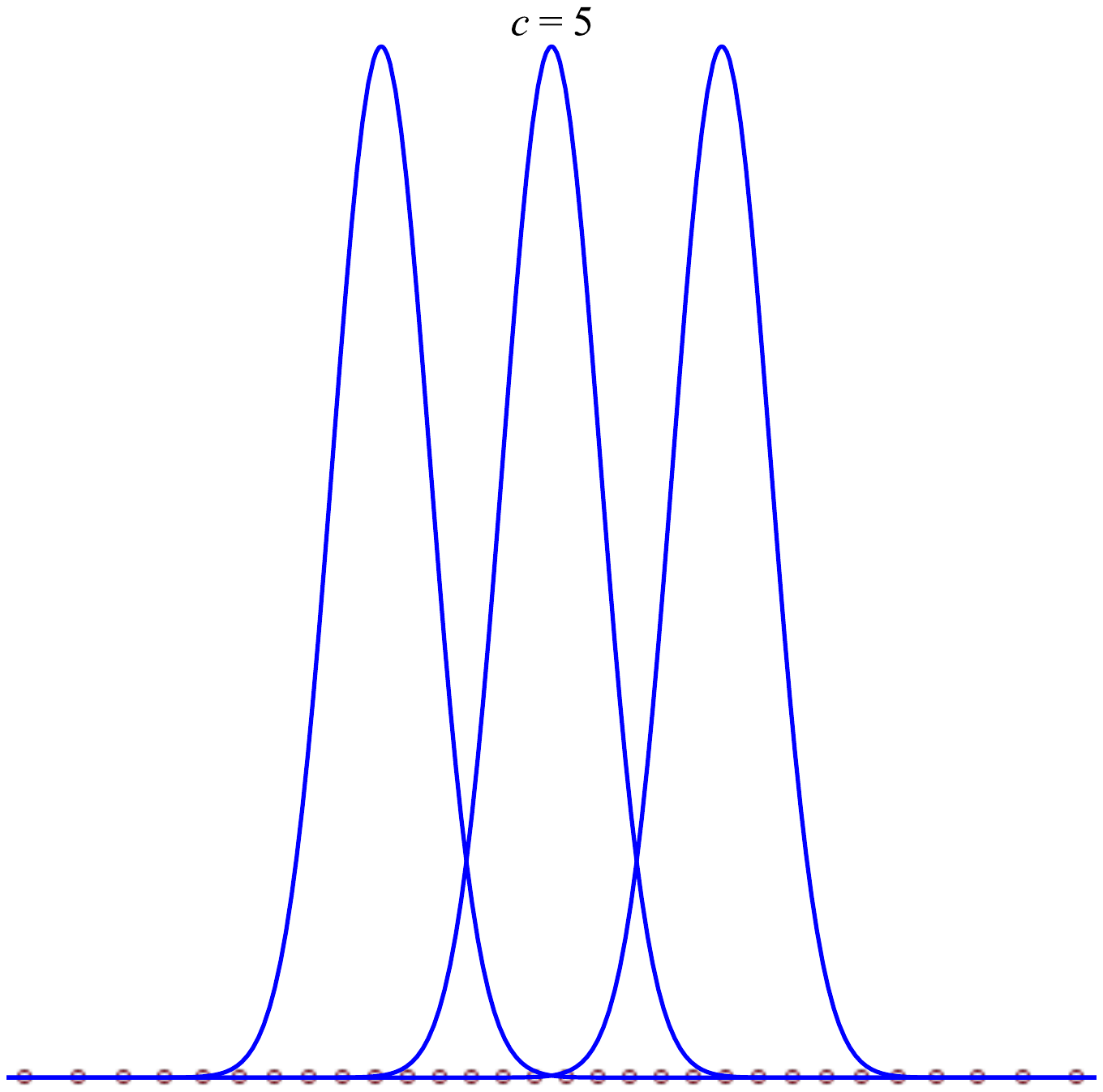}
\caption{The weight functions and the zeros of $H_{10,10,10}$ for $c=15$ (left) and $c=5$ (right).}
\label{fig:Hzeros}
\end{figure}

\begin{proof}
The differential equation for $y=H_{n,n,n}(x)$ becomes
\begin{equation*}  
y'''' -6x y''' + (12x^2-c^2-6)y'' +[-8x^3+(2c^2+12)x]y' 
  = -2n [ 3y'' -12xy' + (12x^2 -c^2-6)y] .
\end{equation*}
The scaling amounts to studying zeros of $H_{n,n,n}(\sqrt{n} x)$ and these are multiple orthogonal polynomials for the weight functions
\[   w_1(x) = e^{-n(x^2+\hat{c}x)}, \quad  w_2(x) = e^{-nx^2}, \quad   w_3(x) = e^{-n(x^2-\hat{c}x)} .   \]
Consider the rational function
\[    S_n(z) = \frac{1}{\sqrt{n}} \frac{H_{n,n,n}'(\sqrt{n} z)}{H_{n,n,n}(\sqrt{n} z)} 
= \frac{1}{n}\sum_{j=1}^{3n} \frac{1}{z-\frac{x_{j,3n}}{\sqrt{n}}} = \int \frac{d\mu_n(x)}{z-x} , \]
where $\mu_n$ is the discrete measure with mass $1/n$ at each scaled zero $x_{j,3n}/\sqrt{n}$:
\[   \mu_n = \frac{1}{n} \sum_{j=1}^{3n}  \delta_{x_{j,3n}/\sqrt{n}}.  \]
The sequence $(S_n)_{n \in \mathbb{N}}$ is a family of analytic functions which is uniformly bounded on every compact subset of 
$\mathbb{C} \setminus \mathbb{R}$,
hence by Montel's theorem there exists a subsequence $(S_{n_k})_k$ that converges
uniformly on compact subsets of $\mathbb{C} \setminus \mathbb{R}$ to an analytic function $S$, and also its derivatives
converge uniformly on these compact subsets:
\[   S_{n_k} \to S, \quad S_{n_k}' \to S', \quad S_{n_k}'' \to S'', \quad  S_{n_k}''' \to S''' .  \]
Since each $S_n$ is a Stieltjes transform of a positive measure (with total mass 3), the limit is of the form
\[    S(z) = 3 \int \frac{d\mu(x)}{z-x}\, dx , \]
with $\mu$ a probability measure on $\mathbb{R}$, which describes the asymptotic distribution of the scaled zeros, and
$\mu_n$ converges weakly to the measure $3\mu$ alongs the chosen subsequence. 
This function $S$ may depend on the selected subsequence $(n_k)_k$, but we will show
that every convergent subsequence has the same limit $S$. Observe that
\[ H_{n,n,n}'(\sqrt{n}z) = \sqrt{n} S_n H_{n,n,n}(\sqrt{n}z), \]
from which we can find
\begin{align*}
   H_{n,n,n}''(\sqrt{n}z) &= (S_n' + n S_n^2) H_{n,n,n}(\sqrt{n}z), \\
   H_{n,n,n}'''(\sqrt{n}z) &= \frac{1}{\sqrt{n}} (S_n'' + 3n S_n'S_n + n^2 S_n^3) H_{n,n,n}(\sqrt{n}z), \\
   H_{n,n,n}''''(\sqrt{n}z) &= \frac{1}{n} (S_n''' + 4n S_n''S_n + 3n (S_n')^2 + 6n^2 S_n^2 S_n' + n^3 S_n^4) H_{n,n,n}(\sqrt{n}z).
\end{align*}
Put this in the differential equation (with $x=\sqrt{n}z$ and $c=\sqrt{n}\hat{c}$), then as $n=n_k \to \infty$ one finds
\begin{equation}  \label{EqS}
    S^4 - 6z S^3 + (12z^2-\hat{c}^2+6) S^2 + (-8z^3+2\hat{c}^2z-24z)S +2 (12z^2-\hat{c}^2) = 0 .  
\end{equation}
This is an algebraic equation of order 4 and hence it has four solutions $S_{(1)}, S_{(2)}, S_{(3)}, S_{(4)}$. A careful analysis
of these solutions and equation \eqref{EqS} near infinity shows that for $z \to \infty$
\[   S_{(1)}(z) = \frac{3}{z} + \mathcal{O}(\frac{1}{z^2}), \quad   S_{(2)}(z) = 2z+\hat{c}+ \mathcal{O}(\frac{1}{z}), \]
\[   S_{(3)}(z) = 2z + \mathcal{O}(\frac{1}{z}), \quad   S_{(4)}(z) = 2z-\hat{c}+ \mathcal{O}(\frac{1}{z}). \]
We are therefore interested in $S_{(1)}(z)$ since it gives the required Stieltjes transform
\[    S_{(1)}(z) = S(z) =  3 \int \frac{d\mu(x)}{z-x}\, dx .  \]
The algebraic equation is independent of the selected subsequence, which implies that every subsequence $(S_{n_k})_k$ has the same
limit, which in turn implies that the full sequence $(S_n)_n$ converges to this limit $S$.
The measure $\mu$ can be retrieved by using the Stieltjes-Perron inversion theorem
\[   \mu((a,b)) + \frac12 \mu(\{a\}) + \mu(\{b\}) = \lim_{\epsilon \to 0+} \frac{1}{2\pi i} \int_a^b \frac{S(x-i\epsilon)-S(x+i\epsilon)}{3}\, dx. \]
If $\mu$ has no mass points, then the density $v$ of $\mu$ is given by
\[    v(x) = \frac{1}{2\pi i}  \lim_{\epsilon \to 0+} \frac{S(x-i\epsilon)-S(x+i\epsilon)}{3}, \]
hence the support of the density $v$ is given by the set on $\mathbb{R}$ where $S$ has a jump discontinuity.
This can be analyzed by investigating the {discriminant} of the algebraic equation: 
\begin{multline}  \label{pol}
    256 \hat{c}^6 z^6 - 128 \hat{c}^4 (\hat{c}^4+18 \hat{c}^2-18) z^4 \\
  +\ 16 \hat{c}^2 (\hat{c}^8+12 \hat{c}^6+240 \hat{c}^4-1008 \hat{c}^2+432) z^2 
 - 32 \hat{c}^2(\hat{c}^2+4\hat{c}+6)^2 (\hat{c}^2-4\hat{c}+6)^2 .
\end{multline}
This is a polynomial of degree $6$ in the variable $z$. The support of $v$ is where this polynomial is negative.
There is a phase transition from one interval to three intervals when the $z$-polynomial \eqref{pol} has two
double roots. 
\begin{figure}[ht]
\centering
\includegraphics[width=1.6in]{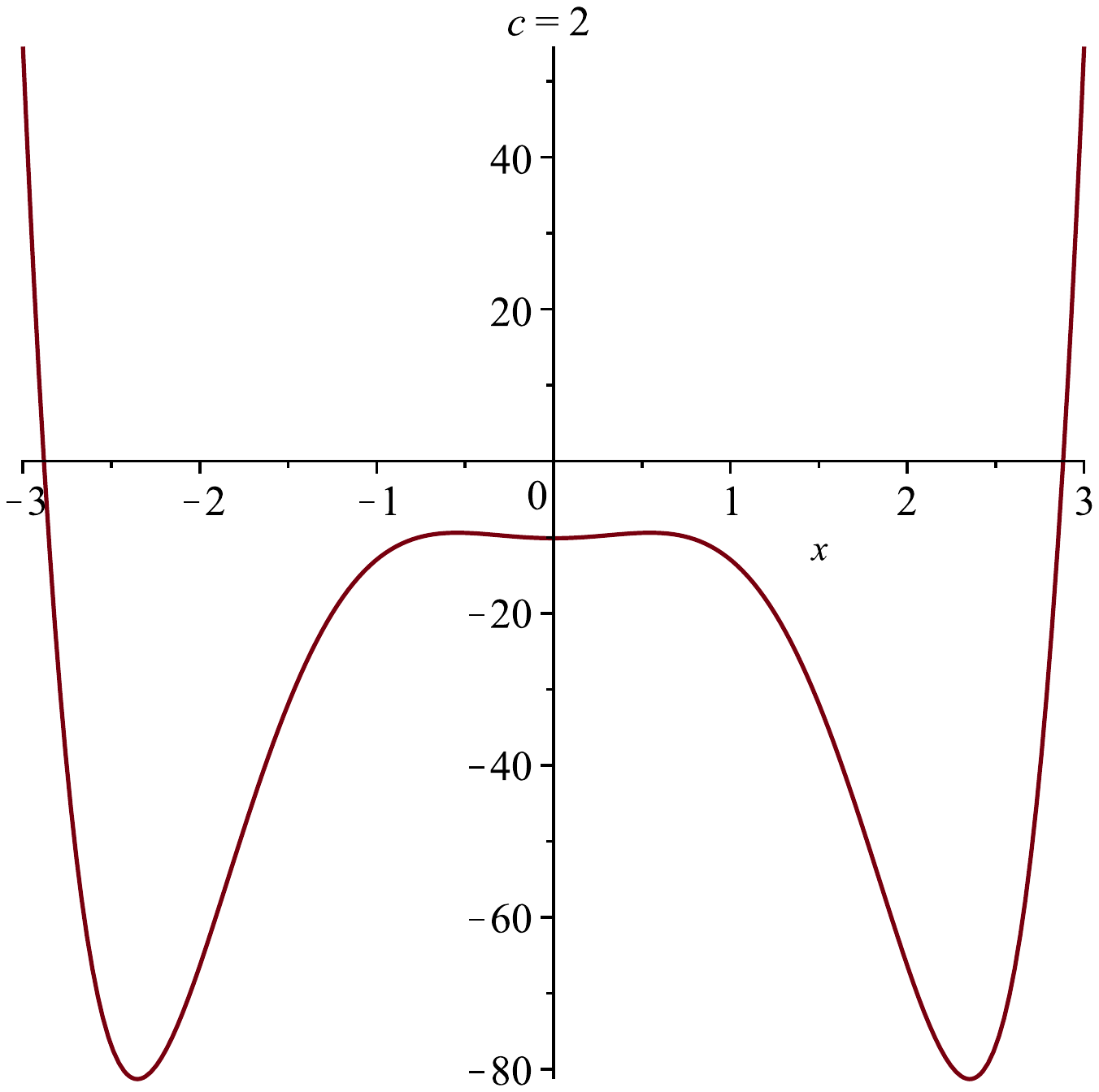}
\includegraphics[width=1.6in]{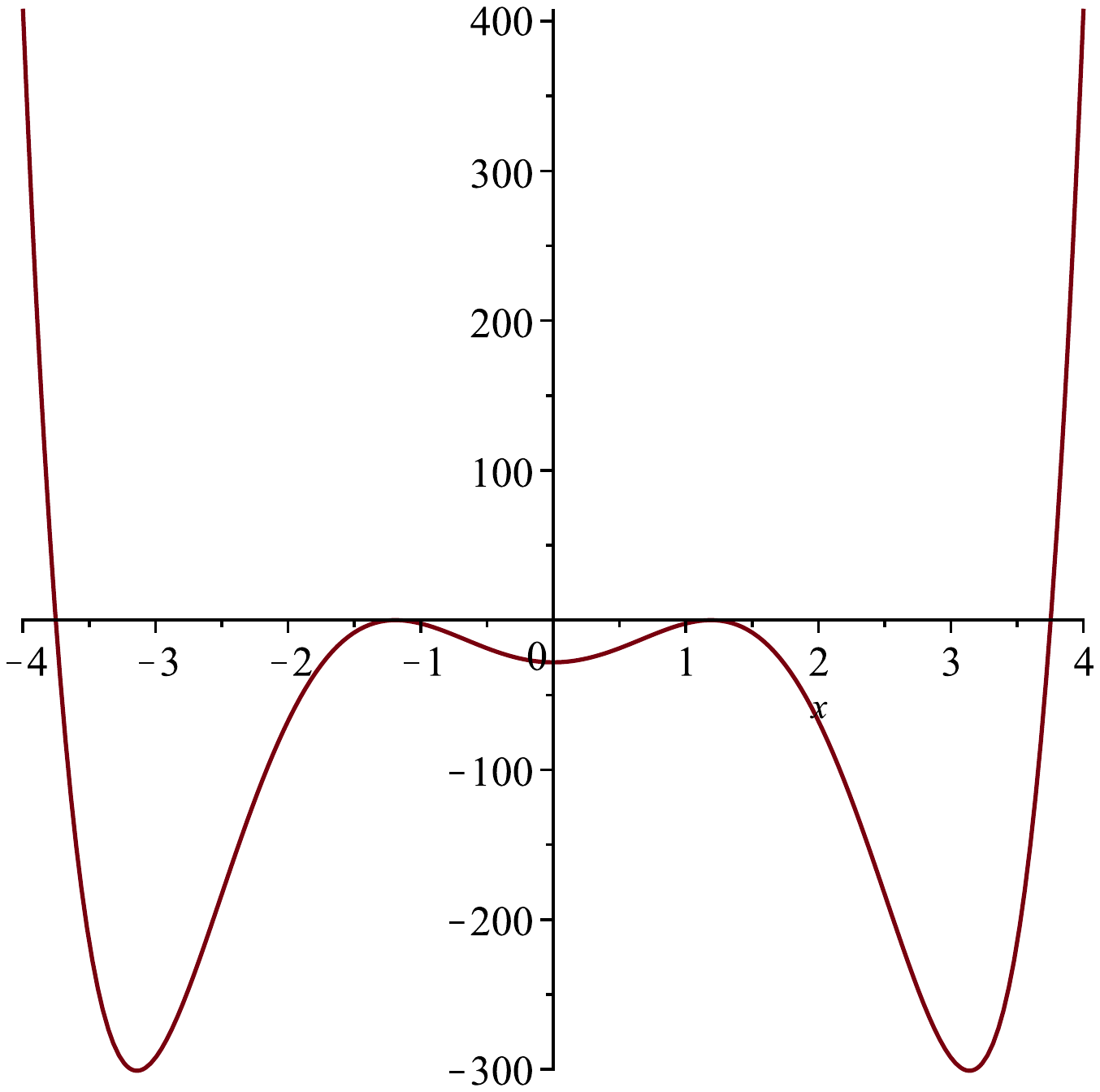}
\includegraphics[width=1.6in]{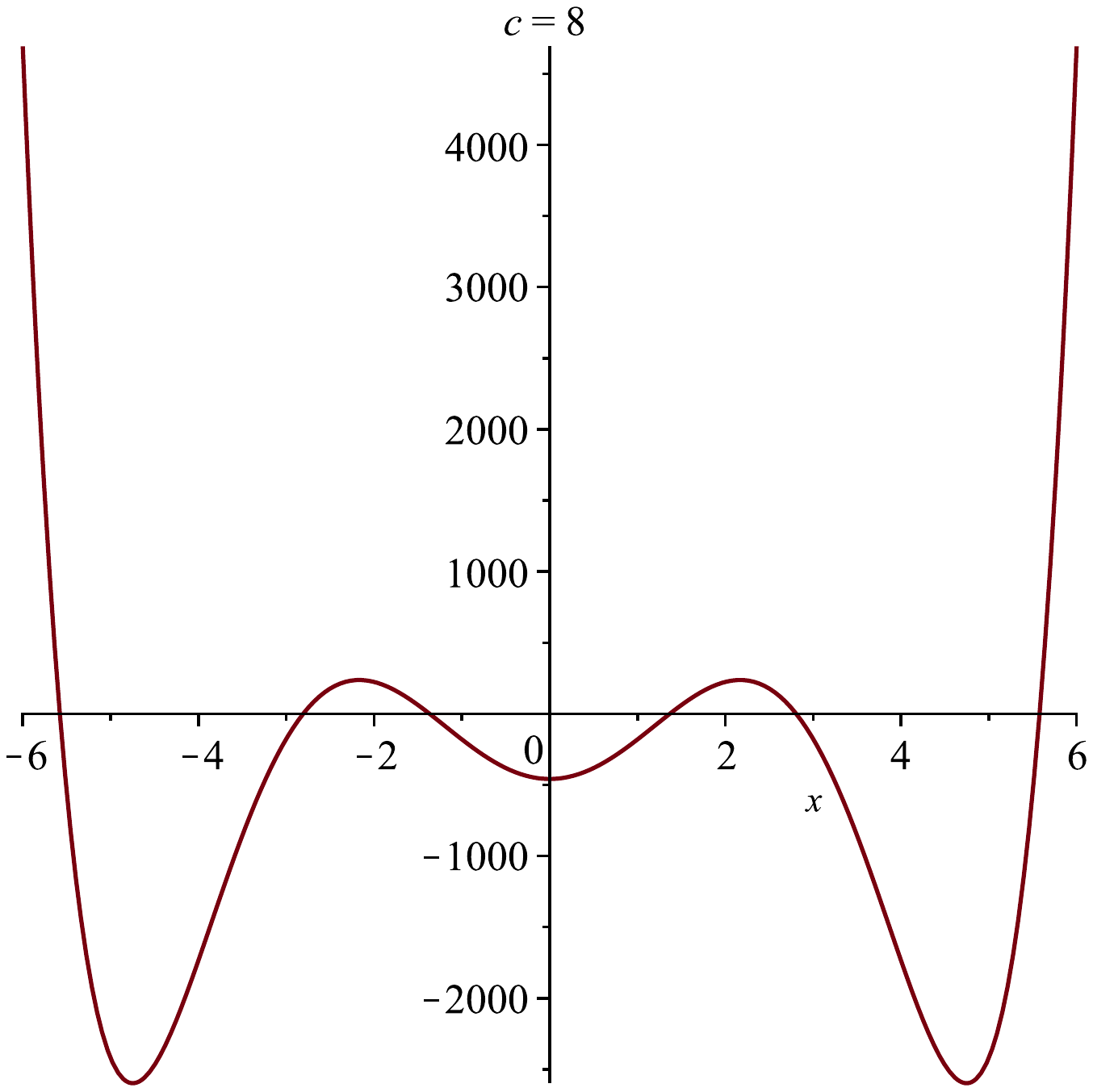}
\caption{The polynomial \eqref{pol} for $\hat{c}=2$ (left), $\hat{c}=\hat{c}^*$ (middle) and $\hat{c}=8$ (right).}
\label{fig:polyS}
\end{figure}
This happens when the discriminant of the $z$-polynomial \eqref{pol} is zero
\[     (\hat{c}^2-4\hat{c}+6)^2 \hat{c}^{32} (\hat{c}^2+2)^4 (\hat{c}^2+4\hat{c}+6)^2 (\hat{c}^6- \frac{27}{2} 
   \hat{c}^4   -54 \hat{c}^2-54)^6 = 0.  \]
The only positive real zero is the positive real root of
\[   \hat{c}^6- \frac{27}{2} \hat{c}^4   -54 \hat{c}^2-54 = 0  \]
and this is $c^* = 4.10938818$. 
\end{proof}

Observe that the phase transition $c^*$ is at a smaller value than the one suggested by Proposition \ref{prop:3.1}, which would give the
value $8$. As mentioned before, this is because in Proposition \ref{prop:3.1} we used the infinite-finite range inequalities for one single
weight and not of the three weights simultaneously.

\section{Some potential theory}
From now one we assume $\hat{c} > c^* = 4.10938818$.
The Stieltjes transform of the asymptotic zero distribution is
\[   3\int \frac{v(x)}{z-x}\, dx =  S(z) = \int_{-b}^{-a} \frac{d\nu_1(x)}{z-x} + \int_{-d}^d \frac{d\nu_2(x)}{z-x} + \int_{a}^{b} \frac{d\nu_3(x)}{z-x}. \]
The measures $\nu_1,\nu_2,\nu_3$ are unit measures that are minimizing the expression
\[    \sum_{i=1}^3 \sum_{j=1}^3  c_{i,j} I(\mu_i,\mu_j)  +  \sum_{i=1}^3 \int V_i(x)\, d\mu_i(x)   \]
over all unit measures $\mu_1,\mu_2,\mu_3$ supported on $\mathbb{R}$, with
\[    I(\mu_i,\mu_j) = \iint  \log \frac{1}{|x-y|}\, d\mu_i(x)\,d\mu_j(x), \qquad
 C = (c_{i,j}) = \begin{pmatrix}  1 & 1/2 & 1/2 \\  1/2 & 1 & 1/2  \\ 1/2 & 1/2 & 1  \end{pmatrix}  \]
and 
\[   V_1(x) = x^2+\hat{c}x, \quad   V_2(x) = x^2 , \quad   V_3(x) =  x^2-\hat{c}x .    \]
This is the vector equilibrium problem for an Angelesco system \cite[Ch. 5, \S 6]{NikiSor}.
Define the logarithmic potential
\[     U(x;\mu) = \int \log \frac{1}{|x-y|} \, d\mu(y).  \]
The variational conditions for this {vector equilibrium problem} are
\begin{align}   
      2 U(x;\nu_1) + U(x;\nu_2) + U(x;\nu_3) + V_1(x) &= \ell_1, \qquad   x \in [-b,-a],  \label{Uvar1}  \\
      2 U(x;\nu_1) + U(x;\nu_2) + U(x;\nu_3) + V_1(x) &\geq \ell_1, \qquad   x \in \mathbb{R} \setminus [-b,-a], 
\end{align}
\begin{align} 
      U(x;\nu_1) + 2U(x;\nu_2) + U(x;\nu_3) + V_2(x) &= \ell_2, \qquad   x \in [-d,d],  \label{Uvar3} \\
      U(x;\nu_1) + 2U(x;\nu_2) + U(x;\nu_3) + V_2(x) &\geq \ell_2, \qquad   x \in \mathbb{R} \setminus [-d,d], 
\end{align}
\begin{align}  
      U(x;\nu_1) + U(x;\nu_2) + 2U(x;\nu_3) + V_3(x) &= \ell_3, \qquad   x \in [a,b], \label{Uvar5} \\
      U(x;\nu_1) + U(x;\nu_2) + 2U(x;\nu_3) + V_3(x) &\geq \ell_3, \qquad   x \in \mathbb{R} \setminus [a,b]. \label{Uvar6}
\end{align}
where $\ell_1,\ell_2,\ell_3$ are constants (Lagrange multipliers). As an example, we have plotted these functions
in Figure \ref{fig:UV} for $\hat{c}=6$.
\begin{figure}[ht]
\centering
\includegraphics[width=5in]{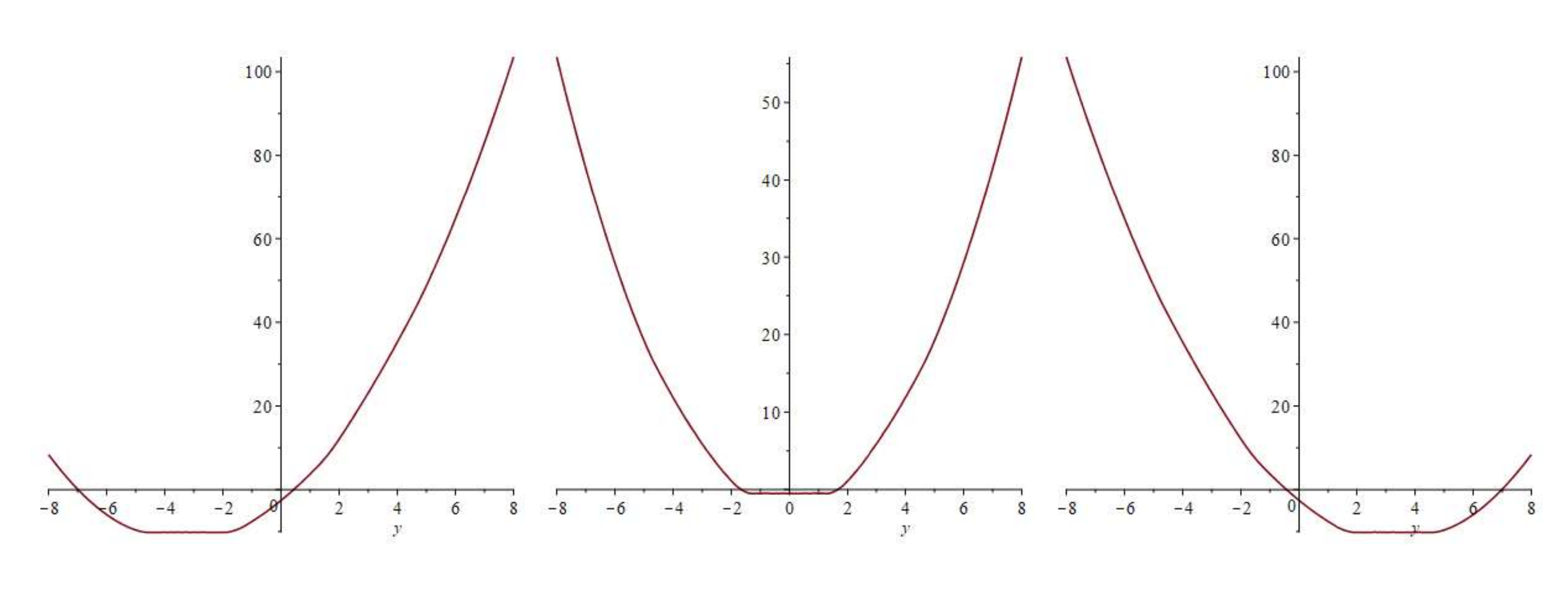}
\caption{$2U(x;\nu_1)+U(x;\nu_2)+U(x;\nu_3)+V_1(x)$ (left), $U(x;\nu_1)+2U(x;\nu_2)+U(x;\nu_3)+V_2(x)$ (middle), 
$U(x;\nu_1)+U(x;\nu_2)+2U(x;\nu_3)+V_3(x)$ (right).}
\label{fig:UV}
\end{figure}
The measures $\nu_1,\nu_2,\nu_3$ give the asymptotic distribution of the (scaled) zeros of $H_{n,n,n}$ on the intervals $[-b,-a]$, $[-d,d]$ and
$[a,b]$ respectively. They are absolutely continuous and their densities can be found as the jumps of an algebraic function $\xi$ on the real line.
The function $\xi$ satisfies the algebraic equation
\[    \xi^4 -2z\hat{c}\xi^3 + (6-\hat{c}^2) \xi^2 + 2\hat{c}^2 z \xi - 2 \hat{c}^2 = 0, \]
which has four solutions $\xi_1,\xi_2,\xi_3,\xi_4$ which behave near infinity as
\[  \xi_1(z) = 2z- \frac{3}{z} + \mathcal{O}(\frac{1}{z^2}), \quad \xi_2(z) = -\hat{c} + \frac{1}{z} + \mathcal{O}(\frac{1}{z^2}), \]  
\[  \xi_3(z) = \frac{1}{z} + \mathcal{O}(\frac{1}{z^2}), \quad \xi_4(z) = \hat{c} + \frac{1}{z} + \mathcal{O}(\frac{1}{z^2}). \]
The densities $\nu_1',\nu_2',\nu_3'$ are given by
\[  \nu_1'(x) = - \frac{(\xi_2)_+(x)-(\xi_2)_-(x)}{2\pi i}, \quad \nu_2'(x) = - \frac{(\xi_3)_+(x)-(\xi_3)_-(x)}{2\pi i}, \] 
\[     \nu_3'(x) = - \frac{(\xi_4)_+(x)-(\xi_4)_-(x)}{2\pi i}.  \]
The relation between the algebraic function $S$ from \eqref{EqS} is given by
\[     S = \frac{2}{\xi} + \frac{2}{\xi+\hat{c}} + \frac{2}{\xi-\hat{c}} .  \]
The Stieltjes transforms of $\nu_1,\nu_2,\nu_3$ are related to the solutions of \eqref{EqS} by
\[  S_{(1)}(z) = \int_{-b}^{-a} \frac{d\nu_1(x)}{z-x} + \int_{-d}^{d} \frac{d\nu_2(x)}{z-x} + \int_a^b \frac{d\nu_3(x)}{z-x},
    \quad  S_{(3)}(z) = 2z - \int_{-d}^d \frac{d\nu_2(x)}{z-x},   \]
\[  S_{(2)}(z) = 2z+\hat{c}-\int_{-b}^{-a} \frac{d\nu_1(x)}{z-x},   
   \quad   S_{(4)}(z) = 2z - \hat{c} - \int_a^b \frac{d\nu_3(x)}{z-x}   .   \] 
 
\section{The quadrature weights}
Recall that for polynomials $f$ of degree $\leq 4n-1$
\begin{eqnarray}  
     \int_{-\infty}^\infty  f(x) e^{-n(x^2+\hat{c}x)}\, dx &=& \sum_{k=1}^{3n} \lambda_{k,3n}^{(1)} f(x_{k,3n}) ,   \label{quad1} \\
     \int_{-\infty}^\infty  f(x) e^{-nx^2}\, dx &=& \sum_{k=1}^{3n} \lambda_{k,3n}^{(2)} f(x_{k,3n}) , \label{quad2}  \\
     \int_{-\infty}^\infty  f(x) e^{-n(x^2-\hat{c}x)}\, dx &=& \sum_{k=1}^{3n} \lambda_{k,3n}^{(3)} f(x_{k,3n}) . \label{quad3}
\end{eqnarray}
Here $x_{k,3n}$ are the zeros of $H_{n,n,n}(x) = p_n(x)q_n(x)r_n(x)$, where 
$p_n$ has its zeros on $[-b,-a]$, $q_n$ on  $[-d,d]$, and $r_n$ on $[a,b]$.  
Take $f(x) = \pi_{2n-1}(x)q_n(x)r_n(x)$, with $\pi_{2n-1}$ of degree $\leq 2n-1$, then \eqref{quad1} gives
\[     \int_{-\infty}^\infty  \pi_{2n-1}(x) q_n(x)r_n(x) e^{-n(x^2+\hat{c}x)}\, dx 
   = \sum_{k=1}^{n} \lambda_{k,3n}^{(1)} q_n(x_{k})r_n(x_{k})\pi_{2n-1}(x_{k}) .  \]
This is the Gauss quadrature formula for the weight function $q_n(x)r_n(x) e^{-n(x^2+\hat{c}x)}$ with quadrature nodes
at the zeros of $p_n$. So we have

\begin{lemma}  \label{lem:5.1}
The first $n$ quadrature weights for the first integral \eqref{quad1} are
\[   \lambda_{k,3n}^{(1)} q_n(x_{k,3n})r_n(x_{k,3n}) = \lambda_{k,n}(q_nr_n\,d\mu_1), \qquad  1 \leq k \leq n, \]
where $\lambda_{k,n}(q_nr_n\,d\mu_1)$ are the usual Christoffel numbers of Gaussian quadrature for the weight $q_n(x)p_n(x)e^{-n(x^2+\hat{c}x)}$ on
$\mathbb{R}$.  
\end{lemma}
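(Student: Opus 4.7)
The plan is to insert a well-chosen test function into the simultaneous quadrature formula \eqref{quad1} so that all but $n$ of the terms on the right-hand side vanish, and then identify what remains as a classical $n$-point Gauss quadrature for a modified weight.

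First I would set $f(x) = \pi_{2n-1}(x) q_n(x) r_n(x)$ for an arbitrary polynomial $\pi_{2n-1}$ of degree at most $2n-1$. Then $\deg f \leq 4n-1$, so \eqref{quad1} holds exactly. Since $q_n r_n$ vanishes precisely at the $2n$ zeros of $H_{n,n,n}$ lying in $[-d,d]\cup[a,b]$, only the $n$ nodes that are zeros of $p_n$ --- namely $x_{1,3n},\ldots,x_{n,3n}$ in $[-b,-a]$ --- survive on the right. Writing $d\sigma(x) = q_n(x) r_n(x) e^{-n(x^2+\hat{c}x)}\,dx$, this yields
\[
\int_{-\infty}^\infty \pi_{2n-1}(x)\,d\sigma(x) \;=\; \sum_{k=1}^n \bigl(\lambda_{k,3n}^{(1)} q_n(x_{k,3n}) r_n(x_{k,3n})\bigr)\pi_{2n-1}(x_{k,3n}).
\]

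Next I would recognise this as the unique Gauss quadrature for $d\sigma$ with nodes at the zeros of $p_n$. The multiple orthogonality of $H_{n,n,n}=p_n q_n r_n$ against $w_1 = e^{-n(x^2+\hat{c}x)}$ gives
\[
\int x^k p_n(x)\,d\sigma(x) \;=\; \int x^k H_{n,n,n}(x)\, e^{-n(x^2+\hat{c}x)}\,dx \;=\; 0, \qquad 0 \leq k \leq n-1,
\]
so $p_n$ is the monic degree-$n$ orthogonal polynomial for the linear functional associated with $d\sigma$. Its zeros are therefore the Gauss nodes, and the corresponding Christoffel numbers $\lambda_{k,n}(q_n r_n\,d\mu_1)$ are uniquely determined by exactness on polynomials of degree $\leq 2n-1$. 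Comparing the two expressions forces $\lambda_{k,3n}^{(1)} q_n(x_{k,3n}) r_n(x_{k,3n}) = \lambda_{k,n}(q_n r_n\,d\mu_1)$ for $1\le k\le n$.

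The one technical delicacy I anticipate is that $d\sigma$ is a \emph{signed} measure on $\mathbb{R}$, because $q_n r_n$ changes sign at each of its $2n$ real zeros; so one must interpret ``Gauss quadrature'' in the quasi-definite sense, where the positivity of the Christoffel numbers is lost but existence and uniqueness of the interpolatory rule of degree $2n-1$ still follow from the existence of an orthogonal polynomial $p_n$ with simple real zeros --- which Proposition \ref{prop:3.1} supplies. I expect this to be the only substantive point; the rest of the argument is a direct collapse of the simultaneous rule to a classical single-weight rule via the factorisation $H_{n,n,n}=p_n q_n r_n$.
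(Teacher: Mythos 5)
Your proof is correct and follows essentially the same route as the paper: substituting $f(x)=\pi_{2n-1}(x)q_n(x)r_n(x)$ into \eqref{quad1} collapses the sum to the $n$ zeros of $p_n$ and identifies the resulting identity as the Gauss quadrature rule for the modified weight $q_n(x)r_n(x)e^{-n(x^2+\hat{c}x)}$. Your additional verification that $p_n$ is the degree-$n$ orthogonal polynomial for this signed measure, and your remark about interpreting Gauss quadrature in the quasi-definite sense, simply make explicit what the paper leaves implicit.
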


For the middle $n$ quadrature weights and the last $n$ quadrature weights we have a weaker statement. By taking 
$f(x) = \pi_{n-1}(x)p_n^2(x)r_n(x)$, with $\pi_{n-1}$ of degree $\leq n-1$, the quadrature formula \eqref{quad1} gives
\[     \int_{-\infty}^\infty  \pi_{n-1}(x) p_n^2(x)r_n(x) e^{-n(x^2+\hat{c}x)}\, dx 
   = \sum_{k=n+1}^{2n} \lambda_{k,3n}^{(1)} p_n^2(x_{k})r_n(x_{k})\pi_{n-1}(x_{k}) .  \]
This is not a Gauss quadrature rule but the Lagrange interpolatory rule for the weight function $p_n^2(x)r_n(x) e^{-n(x^2+\hat{c}x)}$,
with quadrature nodes at the zeros of $q_n$. So now we have

\begin{lemma}   \label{lem:5.2}
The middle $n$ quadrature weights for the first integral are
\[   \lambda_{k,3n}^{(1)} p_n^2(x_{k,3n})r_n(x_{k,3n}) = w_{k,n}(q_n), \qquad n+1 \leq k \leq 2n, \]
where $w_{k,n}(q_n)$ are the quadrature weights for the Lagrange interpolatory quadrature at the zeros of $q_n$ and weight function
$p_n^2(x)r_n(x) e^{-n(x^2+\hat{c}x)}$. 
\end{lemma}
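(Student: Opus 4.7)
The plan is to mirror the strategy used for Lemma \ref{lem:5.1} but with a test function crafted to annihilate the outer $2n$ nodes instead of the middle and outer $2n$ nodes. First I would check the degree count: the candidate $f(x)=\pi_{n-1}(x)p_n^2(x)r_n(x)$ has degree at most $(n-1)+2n+n = 4n-1$, so the quadrature formula \eqref{quad1} is exact for it.

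Next I would substitute $f$ into \eqref{quad1} and examine the right-hand side node by node. At the zeros $x_{k,3n}$ of $p_n$ (indices $1\le k\le n$) the factor $p_n^2$ vanishes, and at the zeros of $r_n$ (indices $2n+1\le k\le 3n$) the factor $r_n$ vanishes; in both cases $f(x_{k,3n})=0$. Only the zeros of $q_n$ survive, and at those points $p_n^2(x_{k,3n})r_n(x_{k,3n})$ is nonzero (by Proposition \ref{prop:3.1} the three sets of zeros lie in disjoint intervals, so $p_n$ and $r_n$ do not vanish on the zeros of $q_n$). This reduces \eqref{quad1} to the identity
\[
   \int_{-\infty}^{\infty} \pi_{n-1}(x)\, \bigl[p_n^2(x)r_n(x)e^{-n(x^2+\hat{c}x)}\bigr] dx
   \;=\; \sum_{k=n+1}^{2n} \bigl[\lambda_{k,3n}^{(1)} p_n^2(x_{k,3n})r_n(x_{k,3n})\bigr]\, \pi_{n-1}(x_{k,3n}),
\]
valid for every polynomial $\pi_{n-1}$ of degree at most $n-1$.

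Finally I would invoke uniqueness of interpolatory quadrature. Any quadrature formula supported on the $n$ distinct zeros of $q_n$ that is exact on the $n$-dimensional space of polynomials of degree $\le n-1$ has weights uniquely determined by evaluating the identity on the Lagrange basis polynomials associated to those nodes. Hence the bracketed coefficients in the display above must coincide with the Lagrange interpolatory weights $w_{k,n}(q_n)$ for the weight function $p_n^2(x)r_n(x)e^{-n(x^2+\hat{c}x)}$, which is precisely the conclusion of Lemma \ref{lem:5.2}.

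There is no serious obstacle here; the only subtlety worth emphasizing is that, unlike in Lemma \ref{lem:5.1}, the resulting rule is \emph{not} Gaussian: the polynomial $q_n$ is orthogonal to powers $1,x,\ldots,x^{n-1}$ only against $e^{-n(x^2+\hat{c}x)}$ (via the multiple orthogonality for $H_{n,n,n}$), but not against the reweighted density $p_n^2(x)r_n(x)e^{-n(x^2+\hat{c}x)}$, so no extra orthogonality can be squeezed out to boost the exactness from $n-1$ to $2n-1$.
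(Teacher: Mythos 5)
Your proposal is correct and follows essentially the same route as the paper: choose $f(x)=\pi_{n-1}(x)p_n^2(x)r_n(x)$ of degree at most $4n-1$, substitute into \eqref{quad1} so that only the middle $n$ nodes survive, and identify the resulting rule with the Lagrange interpolatory quadrature at the zeros of $q_n$ for the weight $p_n^2(x)r_n(x)e^{-n(x^2+\hat{c}x)}$. Your closing remark that the rule is interpolatory rather than Gaussian matches the paper's own observation.
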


In a similar way, we take $f(x) = \pi_{n-1}(x)p_n^2(x)q_n(x)$, with $\pi_{n-1}$ of degree $\leq n-1$, so that \eqref{quad1} becomes 
\[    \int_{-\infty}^\infty  \pi_{n-1}(x) p_n^2(x)q_n(x) e^{-n(x^2+\hat{c}x)}\, dx 
   = \sum_{k=2n+1}^{3n} \lambda_{k,3n}^{(1)} p_n^2(x_{k})q_n(x_{k})\pi_{n-1}(x_{k}) .  \]
We then have
 
\begin{lemma}    \label{lem:5.3}
The last $n$ quadrature weights for the first integral are
\[   \lambda_{k,3n}^{(1)} p_n^2(x_{k,3n})q_n(x_{k,3n}) = w_{k,n}(r_n), \qquad 2n+1 \leq k \leq 3n, \]
where $w_{k,n}(r_n)$ are the quadrature weights for the Lagrange interpolatory quadrature at the zeros of $r_n$ and weight function
$p_n^2(x)q_n(x) e^{-n(x^2+\hat{c}x)}$. 
\end{lemma}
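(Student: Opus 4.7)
The plan is to verify the lemma by unfolding the reduction indicated just above its statement and matching it against the defining property of interpolatory quadrature. First I would check the degree: if $\pi_{n-1}$ has degree at most $n-1$, then $f(x) = \pi_{n-1}(x)p_n^2(x)q_n(x)$ has degree at most $(n-1)+2n+n = 4n-1$, so the quadrature formula \eqref{quad1} is exact on $f$. Because $p_n^2 q_n$ vanishes at every zero of $p_n$ and every zero of $q_n$, on the right-hand side only the contributions from $k=2n+1,\ldots,3n$ (the zeros of $r_n$, which I will denote $y_1 < \cdots < y_n$) survive, producing exactly the displayed identity just before the lemma statement.

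Next I would recall the definition of Lagrange interpolatory quadrature at the $n$ nodes $y_1,\ldots,y_n$ with weight function $W(x) = p_n^2(x)q_n(x)e^{-n(x^2+\hat{c}x)}$: the weights are $w_{k,n}(r_n) = \int \ell_k(x) W(x)\,dx$ where $\ell_k$ is the $k$-th Lagrange fundamental polynomial for these nodes, and the rule is exact on polynomials of degree at most $n-1$. Consequently,
\[
    \int_{-\infty}^\infty \pi_{n-1}(x)\,p_n^2(x)q_n(x)\,e^{-n(x^2+\hat{c}x)}\,dx = \sum_{k=1}^n w_{k,n}(r_n)\,\pi_{n-1}(y_k)
\]
for every $\pi_{n-1}$ of degree at most $n-1$.

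Finally I would match the two identities. Subtracting them yields
\[
    \sum_{k=1}^n \Bigl[\lambda_{2n+k,3n}^{(1)} p_n^2(y_k) q_n(y_k) - w_{k,n}(r_n)\Bigr]\pi_{n-1}(y_k) = 0
\]
for every $\pi_{n-1}$ of degree at most $n-1$. Specializing $\pi_{n-1}$ to $\ell_j$ (which has degree exactly $n-1$ and satisfies $\ell_j(y_k)=\delta_{jk}$) isolates the $j$-th bracket and gives the claimed equality for each $j$. There is no substantive obstacle here: the argument is the verbatim analogue of Lemma \ref{lem:5.2} with the roles of $q_n$ and $r_n$ interchanged, and the only point that requires a moment's care is the degree count $(n-1)+2n+n = 4n-1$ which guarantees the initial quadrature is exact.
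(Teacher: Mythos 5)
Your proposal is correct and follows essentially the same route as the paper: substitute $f=\pi_{n-1}p_n^2q_n$ of degree at most $4n-1$ into \eqref{quad1}, observe that only the nodes at the zeros of $r_n$ survive, and identify the resulting exact rule on polynomials of degree at most $n-1$ with the Lagrange interpolatory quadrature for the weight $p_n^2(x)q_n(x)e^{-n(x^2+\hat{c}x)}$. The only difference is that you make explicit the final uniqueness step (testing against the Lagrange fundamental polynomials $\ell_j$), which the paper leaves implicit.
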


Of course similar results are true for the quadrature weights $\lambda_{k,3n}^{(2)}$ for the second integral \eqref{quad2} and
the quadrature weights $\lambda_{k,3n}^{(3)}$ for the third integral \eqref{quad3}.

The weight function $q_n(x)r_n(x) e^{-n(x^2+\hat{c}x)}$ is not a positive weight on the whole real line, but it is positive on $[-b,-a]$
since the zeros of $q_n$ and $r_n$ are on $[-d,d]$ and $[a,b]$ respectively, at least when $n$ is large. We can prove the
following result.
 
\begin{theorem}  \label{thm:5.4}
Let $\hat{c}$ be sufficiently large\footnote{$\hat{c} > 8$ certainly works, but we conjecture that $\hat{c} > c^*$ is sufficient.}. 
For the quadrature weights of the first integral \eqref{quad1} one has
\[  \lambda_{k,3n}^{(1)} > 0,  \qquad 1 \leq k \leq n, \]
and
\[  \textup{sign } \lambda_{k,3n}^{(1)} = (-1)^{k-n+1}, \qquad  n+1 \leq k \leq 3n.  \]
\end{theorem}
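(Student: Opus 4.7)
My plan is to use Lemmas~\ref{lem:5.1}, \ref{lem:5.2}, \ref{lem:5.3} to rewrite each quadrature weight $\lambda_{k,3n}^{(1)}$ as an integral against the signed weight $e^{-n(x^2+\hat{c}x)}\,dx$ of a polynomial of degree at most $4n-1$, divided by an explicit product of polynomial evaluations with easily determined signs. In every case the integrand has constant sign on the leftmost interval $I_1$ (since all factors $(x-z)$ with $z$ a zero of $q_n$ or $r_n$ are negative on $I_1$), and $I_1$ contains the minimum $x=-\hat{c}/2$ of the phase $x^2+\hat{c}x$. Exactly as in the proof of Proposition~\ref{prop:3.1}, the infinite-finite range inequality applied to that polynomial, with the Mhaskar-Rakhmanov-Saff interval contained in $I_1$, forces the integral to inherit the sign of its restriction to $I_1$; multiplying by the sign of the prefactor then gives the claim.

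For $1\le k\le n$, Lemma~\ref{lem:5.1} together with the fact that Gaussian quadrature at the $n$ simple real zeros of $p_n$ is exact up to degree $2n-1$ (even against the signed weight $q_nr_n e^{-n(x^2+\hat{c}x)}$) gives
\[
 \lambda_{k,n}(q_nr_n\,d\mu_1) = \int_{\mathbb{R}}\ell_k(x)^2\, q_n(x)r_n(x)\, e^{-n(x^2+\hat{c}x)}\,dx,
\]
where $\ell_k$ is the Lagrange basis polynomial of degree $n-1$ at $x_{k,3n}$. On $I_1$ both $q_n$ and $r_n$ have sign $(-1)^n$, so $q_nr_n>0$, and the integrand is nonnegative on $I_1$ and strictly positive on an open subset. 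Applying the infinite-finite range inequality to $\ell_k^2 q_n r_n$, of degree $4n-2\le 4n+1$, exactly as in Proposition~\ref{prop:3.1}, gives strict positivity of the full integral; dividing by the positive quantity $q_n(x_{k,3n})r_n(x_{k,3n})$ yields $\lambda_{k,3n}^{(1)}>0$.

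For $n+1\le k\le 2n$, set $i=k-n$ and $z_i=x_{k,3n}\in[-d,d]$. Lemma~\ref{lem:5.2} combined with the Lagrange identity $\ell_k(x)=q_n(x)/\bigl((x-z_i)q_n'(z_i)\bigr)$ gives
\[
 w_{k,n}(q_n) = \frac{1}{q_n'(z_i)}\int_{\mathbb{R}} G(x)\, e^{-n(x^2+\hat{c}x)}\,dx,\qquad G(x)=\frac{q_n(x)}{x-z_i}\, p_n^2(x)\, r_n(x).
\]
On $I_1$ the factor $q_n(x)/(x-z_i)=\prod_{j\neq i}(x-z_j)$ has sign $(-1)^{n-1}$, $r_n$ has sign $(-1)^n$, and $p_n^2>0$, so $G<0$ on $I_1$. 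Infinite-finite range applied to $G$ (degree $4n-1$) yields $\int G\,e^{-n(x^2+\hat{c}x)}\,dx<0$. Combined with $\operatorname{sign} q_n'(z_i)=(-1)^{n-i}$, $\operatorname{sign} r_n(x_{k,3n})=(-1)^n$, and $p_n^2>0$, this produces
\[
 \operatorname{sign}\lambda_{k,3n}^{(1)} = (-1)^{1-i} = (-1)^{k-n+1}.
\]
The case $2n+1\le k\le 3n$ is entirely symmetric via Lemma~\ref{lem:5.3}: with $y_i=x_{k,3n}$, $i=k-2n$, the polynomial $H(x)=\frac{r_n(x)}{x-y_i}p_n^2(x)q_n(x)$ is again negative on $I_1$, and together with $q_n(x_{k,3n})>0$ and $\operatorname{sign} r_n'(y_i)=(-1)^{n-i}$ one obtains $\operatorname{sign}\lambda_{k,3n}^{(1)}=(-1)^{3n-k+1}=(-1)^{k-n+1}$.

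The main technical obstacle is ensuring that the Mhaskar-Rakhmanov-Saff interval for the weight $e^{-n(x^2+\hat{c}x)}$ and polynomial degree $\le 4n+1$ is contained in $I_1$. This is precisely the condition $\hat{c}>4\sqrt{4+1/n}$ used in Proposition~\ref{prop:3.1}, which is implied by $\hat{c}>8$ for $n$ sufficiently large. Sharpening the result down to the conjectured threshold $\hat{c}>c^*$ would require a multi-weight infinite-finite range inequality tailored to the vector equilibrium problem of the previous section, since for $c^*<\hat{c}<8$ the single-weight MRS interval overlaps the adjacent intervals $I_2$ and $I_3$, which is precisely why the stronger statement is stated only as a conjecture in the footnote.
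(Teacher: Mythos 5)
Your proposal is correct and follows essentially the same route as the paper: positivity of the first $n$ weights via the integral of $p_n^2(x)q_n(x)r_n(x)/(x-x_{k,3n})^2$ (your $\ell_k^2\,q_nr_n$ is the same integrand up to the positive factor $[p_n'(x_{k,3n})]^{-2}$) combined with the infinite--finite range inequality on $I_1$, and the sign of the remaining weights via Lemmas~\ref{lem:5.2} and \ref{lem:5.3} with the same sign bookkeeping for $q_n'$, $r_n'$, $q_n$, $r_n$ on $I_1$. The sign computations check out, so no changes are needed.
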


\begin{proof}
For the first $n$ weights we use $f(x)=p_n^2(x)q_n(x)r_n(x)/(x-x_{k,3n})^2$ in \eqref{quad1} to find  (we write $x_k = x_{k,3n}$)
\[   \lambda_{k,3n}^{(1)} [p_n'(x_k)]^2 q_n(x_k) r_n(x_k) = \int_{-\infty}^\infty \frac{p_n^2(x)}{(x-x_k)^2} q_n(x)r_n(x) e^{-n(x^2+\hat{c}x)}\, dx. \]
Clearly $[p_n'(x_k)]^2 q_n(x_k) r_n(x_k) >0$ since $x_k \in [-b,-a]$ and the zeros of $q_n$ and $r_n$ are on $[-d,d]$ and $[a,b]$ respectively.
So we need to prove that the integral is positive. Let $I_1= [-\frac{\hat{c}}{2} - \sqrt{4+1/n}, -\frac{\hat{c}}{2} + \sqrt{4+1/n}]$, then by
Proposition \ref{prop:3.1} all the zeros of $p_n$ are in $I_1$ and hence $[-b,-a] \subset I_1$. 
If $\hat{c}$ is large enough, then $q_nr_n$ is positive on $I_1$ and by the infinite-finite range inequality (see Proposition \ref{prop:3.1})
\[    \int_{\mathbb{R} \setminus I_1}  \frac{p_n^2(x)}{(x-x_k)^2} |q_n(x)r_n(x)| e^{-n(x^2+\hat{c}x)}\, dx 
    >  \int_{I_1} \frac{p_n^2(x)}{(x-x_k)^2} q_n(x)r_n(x) e^{-n(x^2+\hat{c}x)}\, dx, \]
so that
\begin{multline*}
   \int_{-\infty}^\infty \frac{p_n^2(x)}{(x-x_k)^2} q_n(x)r_n(x) e^{-n(x^2+\hat{c}x)}\, dx \\
    = \int_{I_1} \frac{p_n^2(x)q_n(x)r_n(x)}{(x-x_k)^2}  e^{-n(x^2+\hat{c}x)}\, dx
       + \int_{\mathbb{R} \setminus I_1} \frac{p_n^2(x)q_n(x)r_n(x)}{(x-x_k)^2}  e^{-n(x^2+\hat{c}x)}\, dx > 0. 
\end{multline*}

For the middle $n$ quadrature weights we use Lemma \ref{lem:5.2}. Clearly $p_n^2(x_k) >0$ and $\textup{sign } r_n(x_k) = (-1)^n$ since all the zeros
of $r_n$ are on $[a,b]$ and $x_k \in [-d,d]$ for $n+1 \leq k \leq 2n$. Furthermore for the Lagrange quadrature nodes one has  
\[     w_{k,n}(q_n) = \int_{-\infty}^\infty \frac{q_n(x)}{(x-x_k) {q_n'(x_k)} } p_n^2(x) r_n(x) e^{-n(x^2+\hat{c}x)}\, dx  \]
where $\textup{sign } q_n'(x_k) = (-1)^{k-2n}$. Observe that for large enough $\hat{c}$ one has $\textup{sign } q_n(x)/(x-x_k) = (-1)^{n-1}$
on $I_1$ since all the zeros of $q_n$ are on $[-d,d]$, and also $\textup{sign }r_n(x) = (-1)^n$ on $I_1$ since all the zeros of $r_n$
are on $[a,b]$.
By the infinite-finite range inequality one has
\[   \int_{\mathbb{R} \setminus I_1} \frac{|q_n(x)|}{|x-x_k|} p_n^2(x)|r_n(x)| e^{-n(x^2+\hat{c}x)}\, dx
      < - \int_{I_1} \frac{q_n(x)}{x-x_k} p_n^2(x)r_n(x) e^{-n(x^2+\hat{c}x)}\, dx   \]
so that
\[  \int_{-\infty}^\infty \frac{q_n(x)}{(x-x_k)} p_n^2(x) r_n(x) e^{-n(x^2+\hat{c}x)}\, dx  < 0 . \]
This gives $\textup{sign } \lambda_{k,3n}^{(1)} = (-1)^{k-n+1}$ for $n+1 \leq k \leq 2n$. In a similar way one finds the sign
of $\lambda_{k,3n}^{(1)}$ for $2n+1 \leq k \leq 3n$ by using Lemma \ref{lem:5.3}.
\end{proof}  

\begin{figure}[ht]
\centering
\includegraphics[width=3in]{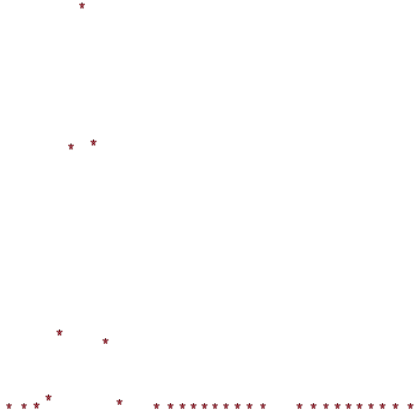}
\caption{The quadrature weights $\lambda_{k,30}^{(1)}$ for the first integral ($\hat{c}=4.7434$).}
\label{fig:lambda}
\end{figure}

For the quadrature weights $\lambda_{k,3n}^{(2)}$ one has a similar result, which we state without proof.

\begin{theorem}  \label{thm:5.5}
Let $\hat{c}$ be sufficiently large. 
For the quadrature weights of the second integral \eqref{quad2} one has
\[  \lambda_{k,3n}^{(2)} > 0,  \qquad n+1 \leq k \leq 2n, \]
and
\[  \textup{sign } \lambda_{k,3n}^{(2)} = \begin{cases}
    (-1)^{k-n}, &  1 \leq k \leq n, \\
    (-1)^{k+1}, & 2n+1 \leq k \leq 3n.
                    \end{cases}       \]
\end{theorem}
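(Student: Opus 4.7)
The proof will follow the template of Theorem \ref{thm:5.4} with the roles of the three weights rotated cyclically. The middle $n$ nodes (on $[-d,d]$) now lie in the region where the weight $e^{-nx^2}$ of \eqref{quad2} concentrates, so one expects positive weights there, while the outer nodes (on $[-b,-a]$ and $[a,b]$) lie in the exponentially small tails and should produce weights of alternating sign. The first step is to record analogs of Lemmas \ref{lem:5.1}--\ref{lem:5.3} for \eqref{quad2}: substituting $f(x)=\pi_{2n-1}(x)p_n(x)r_n(x)$ with $\deg\pi_{2n-1}\le 2n-1$ into \eqref{quad2} annihilates every contribution except those from the zeros of $q_n$ and turns \eqref{quad2} into a Gauss rule at the zeros of $q_n$ for the signed weight $p_n(x)r_n(x)e^{-nx^2}$, while $f(x)=\pi_{n-1}(x)q_n^2(x)r_n(x)$ and $f(x)=\pi_{n-1}(x)p_n(x)q_n^2(x)$ yield Lagrange interpolatory formulas at the zeros of $p_n$ and of $r_n$ respectively (the squares are chosen so that, on the relevant interval $I_2$, the polynomial factor in the integrand has constant sign).

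For the middle weights I plan to use $f(x)=p_n(x)q_n^2(x)r_n(x)/(x-x_k)^2$, which has degree $4n-2$ and vanishes at every zero of $H_{n,n,n}$ except $x_k$, to obtain
\[
\lambda_{k,3n}^{(2)}\,[q_n'(x_k)]^2\, p_n(x_k)\, r_n(x_k) \;=\; \int_{-\infty}^\infty \frac{p_n(x)q_n^2(x)r_n(x)}{(x-x_k)^2}\,e^{-nx^2}\,dx.
\]
For $x_k\in[-d,d]$ one has $p_n(x_k)>0$, $[q_n'(x_k)]^2>0$, and $\textup{sign}\,r_n(x_k)=(-1)^n$. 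On the enlarged interval $I_2=[-\sqrt{4+1/n},\sqrt{4+1/n}]$, provided $\hat{c}$ is large enough, $p_n>0$ and $r_n$ has sign $(-1)^n$ throughout, while $q_n^2/(x-x_k)^2\ge 0$, so the integrand has sign $(-1)^n$ on $I_2$. Applying the infinite-finite range inequality of \cite[Ch.~4, Thm.~4.1]{LL} with $Q(x)=x^2$, exactly as it was used on $I_1$ in the proof of Theorem \ref{thm:5.4}, shows that the contribution from $\mathbb{R}\setminus I_2$ cannot cancel the integral over $I_2$. Hence the right-hand side has sign $(-1)^n$ and $\lambda_{k,3n}^{(2)}>0$.

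For the outer weights I would use the Lagrange analogs. For $1\le k\le n$ the identity
\[
\lambda_{k,3n}^{(2)}\, q_n^2(x_k)\,r_n(x_k) \;=\; \int_{-\infty}^\infty \frac{p_n(x)}{(x-x_k)\,p_n'(x_k)}\,q_n^2(x)\,r_n(x)\,e^{-nx^2}\,dx,
\]
together with $q_n^2(x_k)>0$, $\textup{sign}\,r_n(x_k)=(-1)^n$, $\textup{sign}\,p_n'(x_k)=(-1)^{n-k}$, and the observation that on $I_2$ one has $p_n>0$, $q_n^2\ge 0$, $\textup{sign}\,r_n=(-1)^n$ and $x-x_k>0$, makes the polynomial factor in the integrand of constant sign $(-1)^k$ on $I_2$; the infinite-finite range inequality again dominates the tails, and combining signs yields $\textup{sign}\,\lambda_{k,3n}^{(2)}=(-1)^{k-n}$. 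The case $2n+1\le k\le 3n$ is handled identically via the analog of Lemma \ref{lem:5.3}, noting that now $x-x_k<0$ on $I_2$ and $\textup{sign}\,r_n'(x_k)=(-1)^{3n-k}$, which produces $(-1)^{k+1}$. The main obstacle, as in Theorem \ref{thm:5.4}, is ensuring that $p_n$ and $r_n$ retain constant sign on all of $I_2$; this is exactly what forces the hypothesis that $\hat{c}$ be sufficiently large, and, in line with the conjecture following Theorem \ref{thm:5.4}, one expects $\hat{c}>c^*$ to suffice.
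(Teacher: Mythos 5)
Your proof is correct and follows essentially the approach the paper intends: the theorem is stated there without proof, but your argument is exactly the cyclic adaptation of the proof of Theorem \ref{thm:5.4} (Gauss-type identity with $q_n^2/(x-x_k)^2$ for the middle nodes, Lagrange-type identities for the outer nodes, sign bookkeeping, and the infinite--finite range inequality on $I_2$), and your key identity for $\lambda_{k,3n}^{(2)}$ matches the one the authors themselves use later in the proof of Theorem \ref{thm:5.7}.
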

Observe that the quadrature weights for the nodes outside $[-d,d]$ are alternating, but the weights for the nodes closest to $[-d,d]$ are positive.

For the quadrature nodes $\lambda_{k,3n}^{(3)}$ one has

\begin{theorem}  \label{thm:5.6}
Let $\hat{c}$ be sufficiently large. 
For the quadrature weights of the third integral \eqref{quad3} one has
\[  \lambda_{k,3n}^{(3)} > 0,  \qquad 2n+1 \leq k \leq 3n, \]
and
\[  \textup{sign } \lambda_{k,3n}^{(3)} = (-1)^{k}, \qquad  1 \leq k \leq 2n. \]
\end{theorem}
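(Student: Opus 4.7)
The cleanest route is to exploit the reflection symmetry $x \mapsto -x$, which interchanges $w_1$ and $w_3$ while fixing $w_2$, and to transport Theorem \ref{thm:5.4} across this reflection. First I would show that $H_{n,n,n}$ itself is invariant (up to sign) under the reflection: setting $\tilde H(x) := (-1)^{3n} H_{n,n,n}(-x)$ and changing variables in each orthogonality integral shows that $\tilde H$ satisfies exactly the same orthogonality conditions as $H_{n,n,n}$ (the pair $(w_1,w_3)$ is swapped, but the multi-index $(n,n,n)$ is invariant under this swap). Uniqueness of the monic multiple orthogonal polynomial for the AT system therefore forces
\[
H_{n,n,n}(-x) = (-1)^{3n} H_{n,n,n}(x),
\]
and so the zeros satisfy $x_{k,3n} = -x_{3n+1-k,3n}$.

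Next I would transfer this symmetry to the quadrature rule. Substituting $x \to -y$ in \eqref{quad3} and applying the identity to $g(y) := f(-y)$, which ranges over all polynomials of degree $\le 4n-1$ as $f$ does, converts \eqref{quad3} into a quadrature rule
\[
\int_{-\infty}^{\infty} g(y)\, e^{-n(y^2+\hat{c}y)}\, dy = \sum_{m=1}^{3n} \lambda_{3n+1-m,3n}^{(3)}\, g(x_{m,3n})
\]
that is exact on polynomials of degree $\le 4n-1$. The interpolatory quadrature rule at the $3n$ fixed nodes $x_{m,3n}$ is unique already from exactness on degree $\le 3n-1$, so comparison with \eqref{quad1} forces
\[
\lambda_{k,3n}^{(3)} = \lambda_{3n+1-k,3n}^{(1)}, \qquad 1 \le k \le 3n.
\]

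Theorem \ref{thm:5.6} is then a matter of re-indexing Theorem \ref{thm:5.4}: for $2n+1 \le k \le 3n$ one has $3n+1-k \in [1,n]$, so $\lambda_{3n+1-k,3n}^{(1)} > 0$ and hence $\lambda_{k,3n}^{(3)} > 0$; for $1 \le k \le 2n$ one has $3n+1-k \in [n+1,3n]$, and Theorem \ref{thm:5.4} yields
\[
\mathrm{sign}\, \lambda_{k,3n}^{(3)} = (-1)^{(3n+1-k)-n+1} = (-1)^{2n+2-k} = (-1)^{k},
\]
as claimed.

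The only step that is not pure bookkeeping is the symmetry identity for $H_{n,n,n}$, and even this is a one-line consequence of AT uniqueness; the hypothesis \emph{$\hat{c}$ sufficiently large} enters only through its use in Theorem \ref{thm:5.4}. The obstacle this route sidesteps is the direct re-derivation of analogues of Lemmas \ref{lem:5.1}--\ref{lem:5.3} for the third integral and a fresh infinite-finite range estimate on $I_3 = [\tfrac{\hat{c}}{2}-\sqrt{4+1/n},\,\tfrac{\hat{c}}{2}+\sqrt{4+1/n}]$, with careful sign tracking of $p_n$ and $q_n$ at each cluster of nodes; the reflection collapses all of that sign bookkeeping into the involution $k \leftrightarrow 3n+1-k$.
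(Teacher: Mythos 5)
Your proof is correct, and it takes a genuinely different route from the one the paper intends. The paper states Theorem \ref{thm:5.6} without proof, the implicit argument being a direct repetition of the proof of Theorem \ref{thm:5.4}: one would derive the analogues of Lemmas \ref{lem:5.1}--\ref{lem:5.3} for the weight $e^{-n(x^2-\hat{c}x)}$, apply the infinite-finite range inequality on the interval around $+\hat{c}/2$, and track the signs of $q_n'$, $p_n$ and $q_n$ at each cluster of nodes. You instead observe that the reflection $x\mapsto -x$ swaps $w_1$ and $w_3$ while fixing $w_2$ and the multi-index $(n,n,n)$, so AT-uniqueness forces $H_{n,n,n}(-x)=(-1)^{3n}H_{n,n,n}(x)$ and hence $x_{k,3n}=-x_{3n+1-k,3n}$; uniqueness of the interpolatory rule at $3n$ fixed nodes (already determined by exactness on degree $\leq 3n-1$, while both rules are exact to degree $4n-1$) then yields $\lambda_{k,3n}^{(3)}=\lambda_{3n+1-k,3n}^{(1)}$, and Theorem \ref{thm:5.6} follows from Theorem \ref{thm:5.4} by re-indexing. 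Each step checks out, including the sign computation $(-1)^{(3n+1-k)-n+1}=(-1)^{k}$. What your approach buys is the elimination of all sign bookkeeping and a proof that is manifestly consistent between the first and third integrals; what it does not give for free is Theorem \ref{thm:5.5}, since the reflection maps the second integral to itself and only yields the palindromic relation $\lambda_{k,3n}^{(2)}=\lambda_{3n+1-k,3n}^{(2)}$ (which is consistent with, but weaker than, the stated sign pattern). Since you only invoke Theorem \ref{thm:5.4} and not Theorem \ref{thm:5.5}, this is not a gap in your argument.
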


Having positive quadrature weights is a nice property, as is well known for Gauss quadrature. The alternating quadrature weights are not so nice,
but we can show that they are exponentially small.

\begin{theorem}  \label{thm:5.7} 
Suppose $\hat{c}$ is sufficiently large (see the footnote in Theorem \ref{thm:5.4}).
For the positive quadrature weights one has
\begin{equation}  \label{poslam1}
   \limsup_{n \to \infty} \left( \lambda_{k,3n}^{(1)} \right)^{1/n} \leq e^{-V_1(x)} ,
\end{equation}
whenever $x_k \to x \in (-b,-a)$. For the quadrature weights with alternating sign
\begin{equation}  \label{limsuplambda}
   \limsup_{n \to \infty} |\lambda_{k,3n}^{(1)}|^{1/n} \leq \exp \left( 2 U(x;\nu_1) + U(x;\nu_2) + U(x;\nu_3) - \ell_1 \right)  
\end{equation}
whenever $x_{k,3n} \to x \in (-d,d) \cup (a,b)$.
\end{theorem}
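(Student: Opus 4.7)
Both bounds come from the integral representations used in the proofs of Theorem \ref{thm:5.4} and Lemmas \ref{lem:5.1}--\ref{lem:5.3}, combined with the potential-theoretic asymptotics
\[
\tfrac{1}{n}\log|p_n(x)|\to -U(x;\nu_1),\qquad \tfrac{1}{n}\log|q_n(x)|\to -U(x;\nu_2),\qquad \tfrac{1}{n}\log|r_n(x)|\to -U(x;\nu_3),
\]
which follow from the weak convergence of the normalized zero counting measures of $p_n,q_n,r_n$ to $\nu_1,\nu_2,\nu_3$ (and, at the zeros themselves, from the analogous statement for $p_n',q_n',r_n'$ with one factor omitted). The decisive extra input is then the variational equalities and inequalities \eqref{Uvar1}--\eqref{Uvar6}.

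For \eqref{poslam1}, inserting $f(x)=p_n^2(x)q_n(x)r_n(x)/(x-x_k)^2$, a polynomial of degree $4n-2$, into \eqref{quad1} gives
\[
\lambda_{k,3n}^{(1)} \;=\; \frac{1}{[p_n'(x_k)]^2\,q_n(x_k)\,r_n(x_k)}\int_{-\infty}^{\infty}\frac{p_n^2(x)q_n(x)r_n(x)}{(x-x_k)^2}\,e^{-nV_1(x)}\,dx.
\]
Taking $\tfrac{1}{n}\log$, the prefactor contributes $2U(x;\nu_1)+U(x;\nu_2)+U(x;\nu_3)$ in the limit $x_k\to x\in(-b,-a)$, using $\tfrac{1}{n}\log|p_n'(x_k)|\to -U(x;\nu_1)$ at a zero of $p_n$. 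The integrand, up to a polynomial factor in $x$, is pointwise dominated by $\exp(-n[2U(\cdot;\nu_1)+U(\cdot;\nu_2)+U(\cdot;\nu_3)+V_1])\le e^{-n\ell_1}$ by the variational inequality associated with \eqref{Uvar1} (valid on all of $\mathbb{R}$). Hence $\limsup\tfrac{1}{n}\log\lambda_{k,3n}^{(1)}\le 2U(x;\nu_1)+U(x;\nu_2)+U(x;\nu_3)-\ell_1$, and on $(-b,-a)$ the variational \emph{equality} \eqref{Uvar1} converts the right-hand side to $-V_1(x)$.

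For \eqref{limsuplambda}, Lemma \ref{lem:5.2} (for $x\in(-d,d)$) and Lemma \ref{lem:5.3} (for $x\in(a,b)$) yield analogous integral identities with prefactors $p_n^2(x_k)r_n(x_k)q_n'(x_k)$ and $p_n^2(x_k)q_n(x_k)r_n'(x_k)$ respectively, and integrands $q_np_n^2r_n/(x-x_k)$ and $r_np_n^2q_n/(x-x_k)$ against $e^{-nV_1}$. The very same asymptotic analysis applies, now using $\tfrac{1}{n}\log|q_n'(x_k)|\to -U(x;\nu_2)$ at a zero of $q_n$ and $\tfrac{1}{n}\log|r_n'(x_k)|\to -U(x;\nu_3)$ at a zero of $r_n$. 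Since $x\notin[-b,-a]$, only the variational inequality (not the equality) is at our disposal, so no further simplification is possible and we obtain exactly $\limsup\tfrac{1}{n}\log|\lambda_{k,3n}^{(1)}|\le 2U(x;\nu_1)+U(x;\nu_2)+U(x;\nu_3)-\ell_1$, as claimed.

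The main technical point is upgrading the pointwise logarithmic asymptotics to a bound on the integral. On any large compact this is handled by the lower semicontinuity of $U(\cdot;\nu_j)$ and the principle of descent for the zero counting measures, while the contribution from $|x|\ge R$ is uniformly exponentially small because the quadratic growth of $V_1(x)=x^2+\hat{c}x$ dominates the at most polynomial-in-$|x|$ growth of $|p_n q_n r_n|$, so the tail contributes a negligible amount once $R$ and $n$ are large enough.
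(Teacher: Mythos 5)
There is a genuine gap, and it sits precisely at the point the paper flags as ``more difficult.'' In both halves of your argument the derivative factors $[p_n'(x_k)]^2$, $q_n'(x_k)$, $r_n'(x_k)$ appear in the \emph{denominator}, so to bound $\lambda_{k,3n}^{(1)}$ from above you need a \emph{lower} bound of the form $\liminf_n |q_n'(x_k)|^{1/n} \geq e^{-U(x;\nu_2)}$ (and likewise for $p_n'$, $r_n'$). You assert the two-sided limit ``at a zero of $q_n$'' as if it followed from weak convergence of the zero counting measures, but the point $x_k$ lies in the support of the limiting measure, where the zeros are dense; the principle of descent and lower semicontinuity of the potential give only $\limsup_n |q_n'(x_k)|^{1/n} \leq e^{-U(x;\nu_2)}$, which is the wrong direction (it bounds the reciprocal from \emph{below}). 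Off the support the two-sided limit is standard; on the support it is not, since $|q_n'(x_k)|=\prod_{j\neq k}|x_k-x_j|$ can a priori be abnormally small. The paper spends most of its proof of \eqref{limsuplambda} establishing exactly this lower bound: it writes a second identity coming from the quadrature rule \eqref{quad2} for $f(x)=p_n(x)q_n^2(x)r_n(x)/(x-x_k)^2$, uses the positivity and the bound $\limsup_n (\lambda_{k,3n}^{(2)})^{1/n}\leq e^{-V_2(x)}$ for those weights, and then invokes the variational \emph{equality} \eqref{Uvar3} to conclude $\liminf_n |q_n'(x_k)|^{2/n} \geq e^{-2U(x;\nu_2)}$. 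Your closing paragraph about upgrading pointwise asymptotics to integral bounds addresses the (easier) numerator, not this denominator issue.

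The gap also infects your proof of \eqref{poslam1} in a way that is not repairable by the same trick: to get a lower bound on $|p_n'(x_k)|$ by the paper's method you would need the upper bound $\limsup_n(\lambda_{k,3n}^{(1)})^{1/n}\leq e^{-V_1(x)}$ for the positive weights --- i.e.\ the very statement \eqref{poslam1} you are proving --- so the argument would be circular. The paper avoids $p_n'(x_k)$ entirely for the positive weights: by Lemma \ref{lem:5.1} these are genuine Christoffel numbers for the weight $q_nr_ne^{-nV_1}$, so the Chebyshev--Markov--Stieltjes inequalities give $\lambda_{k,3n}^{(1)}q_n(x_k)r_n(x_k)\leq \int_{x_{k-1}}^{x_{k+1}}q_nr_ne^{-nV_1}\,dx$, and the mean value theorem makes the $q_nr_n$ factors cancel asymptotically (their zeros stay away from $(-b,-a)$, so their potentials are continuous there). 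You should adopt that route for \eqref{poslam1} and then supply the separate lower-bound argument for $|q_n'(x_k)|$ and $|r_n'(x_k)|$ before the asymptotics in \eqref{limsuplambda} can be justified.
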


\begin{proof}
Let $x \in (-b,-a)$, then we use Lemma \ref{lem:5.1} to see that $\lambda_{k,3n}^{(1)} q_n(x_k) r_n(x_k) = \lambda_{k,n}$,
where $\lambda_{k,n}$ are the Gauss quadrature weights for the weight function $q_n(x)r_n(x) e^{-nV_1(x)}$. 
We can use the Chebyshev-Markov-Stieltjes inequalities \cite[\S 3.41]{Szego} for the Gauss quadrature weights to find
\[    \lambda_{k,3n}^{(1)} q_n(x_k) r_n(x_k) \leq \int_{x_{k-1}}^{x_{k+1}} q_n(x)r_n(x) e^{-n V_1(x)}\, dx.  \]
By the mean value theorem, we have
\[   \int_{x_{k-1}}^{x_{k+1}} q_n(x)r_n(x) e^{-n(x^2+\hat{c}x)}\, dx = (x_{k+1}-x_{k-1}) q_n(\xi_n) r_n(\xi_n) e^{-n V_1(\xi_n)} , \]
for some $\xi_n \in (x_{k-1},x_{k+1})$. Then, since $x_{k+1}-x_{k-1} \leq b-a$, we find
\[   \limsup_{n \to \infty} \left( \lambda_{k,3n}^{(1)} \right)^{1/n} \leq e^{-V_1(x)}, \]
whenever $x_k \to x \in (-b,-a)$, since
\[    \lim_{n \to \infty} |q_n(x_k)|^{1/n} = \exp \bigl( -U(x;\nu_2) \bigr) =  \lim_{n \to \infty} |q_n(\xi_n)|^{1/n}, \]
and
\[    \lim_{n \to \infty} |r_n(x_k)|^{1/n} = \exp \bigl( -U(x;\nu_3) \bigr) =  \lim_{n \to \infty} |r_n(\xi_n)|^{1/n}. \]

Let $x \in (-d,d)$, then we use Lemma \ref{lem:5.2} to find
\[    |\lambda_{k,3n}^{(1)} | = \frac{1}{p_n^2(x_k) |r_n(x_k)| |q_n'(x_k)|} \left|\int_{-\infty}^\infty \frac{q_n(x)}{x-x_k} p_n^2(x) r_n(x) e^{-n(x^2+\hat{c}x)}\, dx
   \right|.  \]
For the polynomials $p_n$ and $r_n$ one has
\[   \lim_{n \to \infty} |p_n(x)|^{1/n} = \exp \bigl(-U(x;\nu_1) \bigr), \quad  \lim_{n \to \infty} |r_n(x)| = \exp \bigl( -U(x;\nu_3) \bigr), \]
uniformly in $x \in [-d,d]$, which already gives
\[   \lim_{n \to \infty} \frac{1}{p_n^2(x_k) |r_n(x_k)|} = \exp \bigl( 2U(x;\nu_1)+U(x;\mu_3) \bigr) . \]
For the integral we use the infinite-finite range inequality (see Proposition \ref{prop:3.1}) to find
\[     \left| \int_{-\infty}^\infty \frac{q_n(x)}{x-x_k} p_n^2(x) r_n(x) e^{-n(x^2+\hat{c}x)}\, dx \right|
   \leq 2 \int_{-I_1} \frac{|q_n(x)|}{|x-x_k|} p_n^2(x) |r_n(x)| e^{-n(x^2+\hat{c}x)}\, dx  . \]
For $\hat{c}$ sufficiently large the intervals $I_1$, $[-d,d]$ and $[a,b]$ are disjoint, hence for $x \in I_1$ and $x_k \in [-d,d]$ 
we have $|x-x_k| > \textup{dist}(I_1,[-d,d])=\delta_1$.
We thus have
\[  \left| \int_{-\infty}^\infty \frac{q_n(x)}{x-x_k} p_n^2(x) r_n(x) e^{-n(x^2+\hat{c}x)}\, dx \right|
  \leq \frac{2}{\delta_1} \int_{I_1} |q_n(x)| p_n^2(x) |r_n(x)| e^{-n(x^2+\hat{c}x)}\, dx . \]
Observe that the integrand is
\[    p_n^2(x) |q_n(x) r_n(x)| e^{-n(x^2+\hat{c}x)} = \exp \Bigl(-n \bigl( 2U(x;\nu_1) + U(x;\nu_2) + U(x;\nu_3) +V_1(x) \bigr)\Bigr), \]
and as $n \to \infty$ the $n$th root thus converges to $\exp(-\ell_1)$ when $x \in [-b,-a]$ or $\leq \exp(-\ell_1)$ when $x \notin [-b,-a]$.
We thus have (see the third Corollary \cite[p.~199]{NikiSor} for an Angelesco system)
\[    \limsup_{n\to\infty} \left( \frac{2}{\delta_1} \int_{I_1} |q_n(x)| p_n^2(x) |r_n(x)| e^{-n(x^2+\hat{c}x)}\, dx \right)^{1/n}
     \leq e^{-\ell_1}.  \]

The $n$th root behavior of $|q_n'(x_k)|$ is more difficult because we evaluate $q_n'$ at a point in $(-d,d)$, which is on the support of $\nu_2$ where
the zeros of $q_n$ are dense. Clearly $q_n'$ has $n-1$ zeros between the zeros of $q_n$ and the asymptotic distribution of the zeros of
$q_n'$ is the same as that of $q_n$, hence 
$|q_n'(x)|^{1/n}$ converges to $\exp \bigl( -U(x;\nu_2) \bigr)$ whenever $x \notin [-d,d]$. When $x_k \to x \in (-d,d)$ one can use the principle
of descent \cite[Thm. 6.8 in Ch.~I]{ST} to find
\begin{equation}  \label{q'up}
  \limsup_{n \to \infty}  |q_n'(x_k)|^{1/n} \leq \exp \bigl( -U(x;\nu_2) \bigr), \qquad  x \in (-d,d).  
\end{equation}
To prove the inequality in the other direction, we look at the quadrature weights $\lambda_{k,3n}^{(2)}$ for the second integral \eqref{quad2} corresponding to
the nodes on $[-d,d]$ (the zeros of $q_n$). These nodes are positive and related to the Gauss quadrature nodes for the orthogonal polynomials
with weight function $p_n(x)r_n(x) e^{-nx^2}$, see Theorem \ref{thm:5.5}. 
The result corresponding to \eqref{poslam1} is
\[   \limsup_{n \to \infty} \left( \lambda_{k,3n}^{(2)} \right)^{1/n} \leq e^{-V_2(x)}, \] 
On the other hand, by taking $f(x)=p_n(x)q_n^2(x)r_n(x)/(x-x_k)^2$ in \eqref{quad2}, we see that the quadrature weight $\lambda_{k,3n}^{(2)}$ 
satisfies
\[   \lambda_{k,3n}^{(2)} p_n(x_k)r_n(x_k) [q_n'(x_k)]^2 = \int_{-\infty}^\infty \frac{p_n(x)q_n^2(x)r_n(x)}{(x-x_k)^2} e^{-nx^2}\, dx . \]
Observe that the sign of $r_n(x)$ on $[-d,d]$ is $(-1)^n$, hence
by the infinite-finite range inequalities, one finds
\[  (-1)^n \lambda_{k,3n}^{(2)} p_n(x_k)r_n(x_k) [q_n'(x_k)]^2 = (1+ r_n) \int_{I_2} \frac{p_n(x)q_n^2(x)r_n(x)}{(x-x_k)^2} e^{-nx^2}\, dx,  \]
where $|r_n| < 1$. On $I_2$ we have that $|x-x_k|\leq \delta_2$ where $\delta_2$ is the length of $I_2$, hence
\[  (-1)^n \lambda_{k,3n}^{(2)} p_n(x_k)r_n(x_k) [q_n'(x_k)]^2 \geq \frac{1+r_n}{\delta_2^2}
   \int_{I_2} |p_n(x)| q_n^2(x) |r_n(x)| e^{-nx^2}\, dx, \]
from which we find
\[  |q_n'(x_k)|^2 \geq \frac{1+r_n}{\delta_2^2} \frac{1}{\lambda_{k,3n}^{(2)} |p_n(x_k)r_n(x_k)|} \int_{I_2}  |p_n(x)| q_n^2(x) |r_n(x)| e^{-nx^2}\, dx. \]
By taking the $n$th root and by using the same reasoning as before, we thus find
\[  \liminf_{n \to \infty} |q_n'(x_k)|^{2/n} \geq \exp \bigl( V_2(x) +U(x;\nu_1)+U(x;\nu_3) -\ell_2 \bigr)  \]
and since $x \in (-d,d)$, it follows from \eqref{Uvar3} that the right hand side is $\exp \bigl( -2U(x;\nu_2)\bigr)$. 
Combined with \eqref{q'up} we then have
\[   \lim_{n \to \infty}  |q_n'(x_k)|^{1/n}  = e^{-U(x;\nu_2)},  \]
whenever $x_k \to x \in (-d,d)$.
Combining all these results gives \eqref{limsuplambda} for $x_{k,3n} \to x \in (-d,d)$. The proof for $x_{k,3n} \to x \in (a,b)$ is similar,
using Lemma \ref{lem:5.3}.
\end{proof}

The results corresponding the quadrature weights for the second integral \eqref{quad2} and the thirs integral \eqref{quad3} are:
\begin{theorem}  \label{thm:5.8} 
Suppose $\hat{c}$ is sufficiently large (see the footnote in Theorem \ref{thm:5.4}).
For the positive quadrature weights one has
\begin{equation}  \label{poslam2}
   \limsup_{n \to \infty} \left( \lambda_{k,3n}^{(2)} \right)^{1/n} \leq e^{-V_2(x)} ,
\end{equation}
whenever $x_k \to x \in (-d,d)$. For the quadrature weights with alternating sign
\begin{equation}  \label{limsuplambda2}
   \limsup_{n \to \infty} |\lambda_{k,3n}^{(2)}|^{1/n} \leq \exp \left( U(x;\nu_1) + 2U(x;\nu_2) + U(x;\nu_3) - \ell_2 \right)  
\end{equation}
whenever $x_{k,3n} \to x \in (-b,-a) \cup (a,b)$.
\end{theorem}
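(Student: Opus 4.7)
The plan is to reproduce, mutatis mutandis, the proof of Theorem~\ref{thm:5.7}, interchanging the roles of the middle interval $(-d,d)$ and the outer intervals $[-b,-a]\cup[a,b]$. The starting point is the analogs of Lemmas~\ref{lem:5.1}--\ref{lem:5.3} for the second integral (alluded to just after Lemma~\ref{lem:5.3}): substitution of $f(x)=\pi_{2n-1}(x)p_n(x)r_n(x)$ into \eqref{quad2} makes $\lambda_{k,3n}^{(2)}\,p_n(x_k)|r_n(x_k)|$ a Gauss--Christoffel number of the positive weight $(-1)^n p_n(x)r_n(x)e^{-nx^2}$ on the middle nodes $x_k\in[-d,d]$, while $f(x)=\pi_{n-1}(x)q_n^2(x)r_n(x)$ (respectively $f(x)=\pi_{n-1}(x)q_n^2(x)p_n(x)$) produces a Lagrange interpolatory weight at the zeros of $p_n$ (respectively $r_n$).

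For \eqref{poslam2}, when $x_k\to x\in(-d,d)$, I apply the Chebyshev--Markov--Stieltjes inequality to this Gauss rule, obtaining $\lambda_{k,3n}^{(2)}\,p_n(x_k)|r_n(x_k)|\le\int_{x_{k-1}}^{x_{k+1}}p_n(y)|r_n(y)|e^{-ny^2}\,dy$. The mean value theorem replaces the right-hand side by $(x_{k+1}-x_{k-1})\,p_n(\xi_n)|r_n(\xi_n)|\,e^{-nV_2(\xi_n)}$ with $\xi_n\to x$, and the $n$-th root limits $|p_n(y)|^{1/n}\to e^{-U(x;\nu_1)}$ and $|r_n(y)|^{1/n}\to e^{-U(x;\nu_3)}$ (uniform in $y$ near $x$, since $x$ lies off the supports of $\nu_1$ and $\nu_3$) cancel between the two sides, leaving $\limsup(\lambda_{k,3n}^{(2)})^{1/n}\le e^{-V_2(x)}$.

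For \eqref{limsuplambda2} at $x\in(-b,-a)$, I use the Lagrange representation
\[
|\lambda_{k,3n}^{(2)}|\,q_n^2(x_k)|r_n(x_k)| = \frac{1}{|p_n'(x_k)|}\left|\int_{-\infty}^{\infty}\frac{p_n(y)q_n^2(y)r_n(y)}{y-x_k}\,e^{-ny^2}\,dy\right|.
\]
The infinite--finite range inequality reduces the integral to one over $I_1$, on which $|y-x_k|\ge\delta_1:=\textup{dist}(I_1,[-d,d]\cup[a,b])>0$, and the integrand's $n$-th root equals $\exp(-[U(y;\nu_1)+2U(y;\nu_2)+U(y;\nu_3)+V_2(y)])$, bounded by $e^{-\ell_2}$ uniformly in $y$ by the variational inequality \eqref{Uvar3}. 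The prefactor $[q_n^2(x_k)|r_n(x_k)|]^{-1}$ contributes, in the $n$-th root limit, $\exp(2U(x;\nu_2)+U(x;\nu_3))$ by standard potential-theoretic convergence.

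The main obstacle is controlling $|p_n'(x_k)|^{1/n}$ when $x\in(-b,-a)$, a point on the support of $\nu_1$ where the zeros of $p_n$ cluster. The upper bound $\limsup|p_n'(x_k)|^{1/n}\le e^{-U(x;\nu_1)}$ follows from the principle of descent \cite[Thm.~6.8 Ch.~I]{ST}; the matching lower bound is obtained by imitating the treatment of $q_n'$ in the proof of Theorem~\ref{thm:5.7}. Substituting $f(y)=p_n^2(y)q_n(y)r_n(y)/(y-x_k)^2$ into \eqref{quad1} yields
\[
\lambda_{k,3n}^{(1)}\,q_n(x_k)r_n(x_k)\,[p_n'(x_k)]^2 = \int_{-\infty}^{\infty}\frac{p_n^2(y)q_n(y)r_n(y)}{(y-x_k)^2}\,e^{-nV_1(y)}\,dy;
\]
restricting to $I_1$ (where $|y-x_k|\le |I_1|$) and inverting produces a lower bound on $|p_n'(x_k)|^{2/n}$ in terms of an upper bound on $\lambda_{k,3n}^{(1)}$, which is supplied by \eqref{poslam1} of Theorem~\ref{thm:5.7}. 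The variational equality \eqref{Uvar1} on $[-b,-a]$ then telescopes the resulting combination of potentials into $\liminf|p_n'(x_k)|^{2/n}\ge e^{-2U(x;\nu_1)}$, matching the descent bound. Assembling the three ingredients delivers \eqref{limsuplambda2} for $x\in(-b,-a)$; the case $x\in(a,b)$ is strictly symmetric, using the analog of Lemma~\ref{lem:5.3} together with Theorem~\ref{thm:5.6} in place of Theorem~\ref{thm:5.4}.
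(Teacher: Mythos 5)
The paper states Theorem~\ref{thm:5.8} without proof, leaving it as the obvious transposition of the proof of Theorem~\ref{thm:5.7}, and your proposal is exactly that transposition: the analogs of Lemmas~\ref{lem:5.1}--\ref{lem:5.3} for \eqref{quad2}, Chebyshev--Markov--Stieltjes plus the mean value theorem for the positive weights, the Lagrange representation plus the infinite--finite range inequality and the variational condition for the alternating weights, and the two-sided $n$th-root asymptotics of $p_n'(x_k)$ obtained by pairing the principle of descent with the quadrature identity from \eqref{quad1} and the bound \eqref{poslam1}. All of that is the right adaptation, and the final bookkeeping $\exp(U(x;\nu_1)+2U(x;\nu_2)+U(x;\nu_3)-\ell_2)$ comes out correctly.

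There is, however, one step that fails as written. In the Lagrange-representation part you claim that the infinite--finite range inequality reduces $\int_{-\infty}^{\infty}\frac{p_n(y)q_n^2(y)r_n(y)}{y-x_k}e^{-ny^2}\,dy$ to an integral over $I_1$ and that $|y-x_k|\ge\delta_1>0$ there. Both claims are wrong for this integral: the weight is now $e^{-ny^2}$ (external field $Q(y)=y^2$, no linear term), so the Levin--Lubinsky localization interval is $I_2$, centered at $0$, not $I_1$; and since $x_k\in[-b,-a]\subset I_1$, the distance from $x_k$ to $I_1$ is zero, so the asserted lower bound on $|y-x_k|$ is false. The correct statement is that the integral localizes on $I_2$, where $|y-x_k|\ge\operatorname{dist}(I_2,[-b,-a])>0$ for $\hat{c}$ large; the integrand's $n$th root is then $\le e^{-\ell_2}$ by \eqref{Uvar3}, with equality on $[-d,d]\subset I_2$, and the rest of your computation goes through unchanged. (Note that your subsequent use of $I_1$ in the lower bound for $|p_n'(x_k)|$ is correct, because there the relevant integral comes from \eqref{quad1} with weight $e^{-nV_1}$, which does localize on $I_1$ and contains $x_k$, so the needed inequality is the \emph{upper} bound $|y-x_k|\le|I_1|$.) With the interval corrected, the proof is a faithful and complete analog of the paper's argument, at the same level of rigor (e.g., the same mild looseness about applying Chebyshev--Markov--Stieltjes to a weight that is only positive on the relevant interval).
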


\begin{theorem}  \label{thm:5.9} 
Suppose $\hat{c}$ is sufficiently large (see the footnote in Theorem \ref{thm:5.4}).
For the positive quadrature weights one has
\begin{equation}  \label{poslam3}
   \limsup_{n \to \infty} \left( \lambda_{k,3n}^{(3)} \right)^{1/n} \leq e^{-V_3(x)} ,
\end{equation}
whenever $x_k \to x \in (a,b)$. For the quadrature weights with alternating sign
\begin{equation}  \label{limsuplambda3}
   \limsup_{n \to \infty} |\lambda_{k,3n}^{(3)}|^{1/n} \leq \exp \left( U(x;\nu_1) + U(x;\nu_2) + 2U(x;\nu_3) - \ell_3 \right)  
\end{equation}
whenever $x_{k,3n} \to x \in (-b,-a) \cup (-d,d)$.
\end{theorem}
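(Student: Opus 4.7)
The plan is to transpose the proof of Theorem \ref{thm:5.7} by permuting the three intervals so that $[a,b]$, with potential $V_3$ and polynomial $r_n$, takes the role previously played by $[-b,-a]$, $V_1$, and $p_n$. First I would record the analogues of Lemmas \ref{lem:5.1}--\ref{lem:5.3} for the integral \eqref{quad3}. Choosing $f=\pi_{2n-1}p_n q_n$ in \eqref{quad3} identifies the last $n$ weights as Gauss--Christoffel numbers $\lambda_{k,n}(p_n q_n\,d\mu_3)$ for the weight $p_n q_n e^{-nV_3}$, which is positive on (a neighbourhood of) $[a,b]$. Choosing $f=\pi_{n-1}q_n r_n^2$ and $f=\pi_{n-1}p_n r_n^2$ gives Lagrange interpolatory representations for the first $n$ and middle $n$ weights at the zeros of $p_n$ and $q_n$ respectively.

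For the positive block ($x_k \to x\in(a,b)$) I would follow the first half of the proof of Theorem \ref{thm:5.7}: apply the Chebyshev--Markov--Stieltjes inequalities \cite[\S 3.41]{Szego} to the Gauss weights, bound the resulting integral via the mean value theorem, and use $|p_n(y)|^{1/n}\to e^{-U(y;\nu_1)}$ and $|q_n(y)|^{1/n}\to e^{-U(y;\nu_2)}$ uniformly on compact subsets of $(a,b)\subset\mathbb{R}\setminus(\mathrm{supp}\,\nu_1\cup\mathrm{supp}\,\nu_2)$ to conclude $\limsup_n(\lambda_{k,3n}^{(3)})^{1/n}\leq e^{-V_3(x)}$.

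For the alternating block I would treat the two sub-cases $x\in(-d,d)$ and $x\in(-b,-a)$ separately using the appropriate Lagrange representation. In both cases $|\lambda_{k,3n}^{(3)}|$ factors as a product of polynomial values at $x_k$, the reciprocal of $|q_n'(x_k)|$ or $|p_n'(x_k)|$, and an integral which I would localise to $I_1$ via the infinite-finite range inequality. The localised integrand, in absolute value, equals
\[
\exp\bigl(-n\,[\,U(x;\nu_1)+U(x;\nu_2)+2U(x;\nu_3)+V_3(x)\,]\bigr),
\]
whose $n$th root is at most $e^{-\ell_3}$ on $\mathbb{R}$ by \eqref{Uvar5}--\eqref{Uvar6}. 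Collecting the factors produces the target bound \eqref{limsuplambda3} provided one has the correct exponential asymptotics for the derivative of the relevant polynomial at its own zero.

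The main obstacle, as already in Theorem \ref{thm:5.7}, is this two-sided derivative asymptotic. The principle of descent \cite[Thm.~6.8, Ch.~I]{ST} gives the $\limsup$ bounds $|q_n'(x_k)|^{1/n}\leq e^{-U(x;\nu_2)}$ and $|p_n'(x_k)|^{1/n}\leq e^{-U(x;\nu_1)}$. The matching $\liminf$ for $|q_n'(x_k)|$ on $(-d,d)$ is already proved inside Theorem \ref{thm:5.7}. The matching $\liminf$ for $|p_n'(x_k)|$ on $(-b,-a)$ is obtained by the same device applied to the first integral: starting from the positive-weight identity $\lambda_{k,3n}^{(1)}[p_n'(x_k)]^2 q_n(x_k) r_n(x_k) = \int p_n^2(x)(x-x_k)^{-2} q_n(x) r_n(x) e^{-nV_1(x)}\,dx$ (positivity by Theorem \ref{thm:5.4}), localising to $I_1$, taking $n$th roots, and using \eqref{Uvar1} to replace $V_1(x)+U(x;\nu_2)+U(x;\nu_3)-\ell_1$ by $-2U(x;\nu_1)$, one deduces $\liminf_n|p_n'(x_k)|^{2/n}\geq e^{-2U(x;\nu_1)}$. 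Combining this with the upper bound and assembling all the factors yields \eqref{limsuplambda3}.
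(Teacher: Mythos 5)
Your proposal is correct and follows exactly the route the paper intends: Theorem \ref{thm:5.9} is stated without proof as the mirror image of Theorem \ref{thm:5.7}, and your transposition --- the Gauss/Christoffel representation with the Chebyshev--Markov--Stieltjes inequalities for the positive block on $[a,b]$, the Lagrange representations plus localisation and the variational conditions \eqref{Uvar5}--\eqref{Uvar6} for the alternating blocks, and the two-sided $n$th-root asymptotics for $|p_n'(x_k)|$ extracted from the positivity of $\lambda^{(1)}_{k,3n}$ via \eqref{Uvar1}, in exact parallel to how the paper handles $|q_n'(x_k)|$ --- is precisely the intended argument. One slip to fix: the third integral has weight $e^{-n(x^2-\hat{c}x)}$, so the infinite-finite range inequality localises the relevant integrals to $I_3$ (the interval around $+\hat{c}/2$), not $I_1$; with $I_1$ the lower bound $|x-x_k|\geq\delta$ fails for nodes $x_k\in[-b,-a]\subset I_1$ and the Laplace-type evaluation of the integral would be carried out on the wrong interval. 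Replacing $I_1$ by $I_3$ there, everything you wrote goes through.
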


\begin{figure}[ht]
\centering
\includegraphics[width=5in]{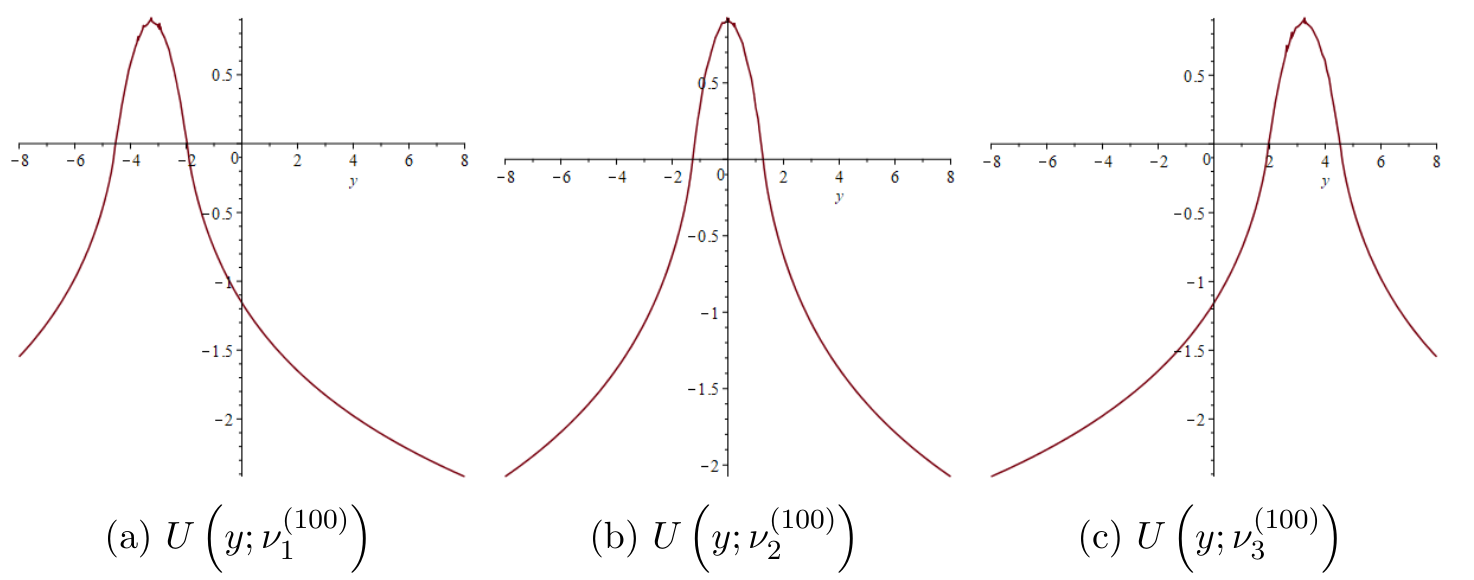}
\caption{The potentials $U(x;\nu_1)$, $U(x;\nu_2)$, $U(x;\nu_3)$ approximated by using the zeros of $H_{100,100,100}$.}
\label{fig:pot}
\end{figure}

All the upper bounds in Theorems \ref{thm:5.7}--\ref{thm:5.9} depend on the logarithmic potentials $U(x;\nu_1)$, $U(x;\nu_2), U(x;\nu_3)$,
and in particular on the linear combination of them that appears in the variational conditions \eqref{Uvar1}--\eqref{Uvar6}.
Observe that by combining \eqref{Uvar3} with \eqref{limsuplambda} we find
\[  \limsup_{n \to \infty} |\lambda_{k,3n}^{(1)}|^{1/n} \leq  e^{-V_2(x)} \exp \bigl( \ell_2 - U(x;\nu_2) - \ell_1 + U(x;\nu_1) \bigr), \]
whenever $x_k \to x \in (-d,d)$, and by using \eqref{Uvar5} we find
\[  \limsup_{n \to \infty} |\lambda_{k,3n}^{(1)}|^{1/n} \leq e^{-V_3(x)} \exp \bigl( \ell_3 - U(x;\nu_3) - \ell_1 + U(x;\nu_1) \bigr), \]
whenever $x_k \to x \in (a,b)$. Hence on $(-d,d)$ the quadrature weights are bounded from above by $e^{-V_2(x)}$ times a factor which is small,
since $\ell_2 - U(x;\nu_2) - \ell_1 + U(x;\nu_1)<0$ on $[-d,d]$. On $(a,b)$ the quadrature weights are bounded by $e^{-V_3(x)}$ times
an even smaller factor, since $\ell_3=\ell_1$ (by symmetry) and $U(x;\nu_3) > U(x;\nu_2) > U(x;\nu_1)$ for $x \in (a,b)$,
see Figure \ref{fig:pot}. This makes the alternating quadrature weights exponentially small.

\section{Numerical example}
In Table \ref{example} and Figure \ref{fig:lambda} we give the quadrature weights $\lambda_{k,3n}^{(1)}$ for the zeros of $H_{10,10,10}$
with $c=15$, which after scaling by $\sqrt{10}$ corresponds to $\hat{c}=4.734$.

This clearly shows that the first 10 zeros are positive and the remaining 20 zeros are alternating in sign and very small in absolute value.
The zeros and the quadrature weights behave in a similar way as in an Angelesco system (see \cite{LubWVA}) when $\hat{c}$ is sufficiently large.
Our scaling and the use of the weight functions 
\[    w_1(x) = e^{-n(x^2+\hat{c}x)}, \quad  w_2(x) = e^{-nx^2}, \quad    w_3(x) = e^{-n(x^2-\hat{c}c)x)}, \]
means that we are using the densities of normal distributions with means $-\hat{c}/2, 0, \hat{c}/2$ and variance $\sigma^2=1/2n$. In such case we can ignore the alternating weights and only use the positive quadrature weights $\{\lambda_{k,3n}^{(1)}: 1 \leq k \leq 3n\}$ to approximate the first
integral \eqref{quad1}. In a similar way, when we approximate the second integral \eqref{quad2} we can ignore the alternating weights and
only use the positive weights $\{ \lambda_{k,3n}^{(2)} : n+1 \leq k \leq 2n\}$, and for approximating the third integral, one can only use
$\{ \lambda_{k,3n}^{(3)}: 2n+1 \leq k \leq 3n\}$.  

\begin{table}[t]
\centering
\caption{The quadrature weights $\lambda_{k,30}^{(1)}$ for the first integral ($\hat{c}=4.7434$).}
\begin{tabular}{|l|l|}
\hline
$k$ & $\lambda_{k,30}^{(1)}$ \\
\hline
1 & $6.887653865\ 10^{-9}$ \\
2 & $4.384111578\ 10^{-6}$ \\
3 & $0.3591983034\ 10^{-3}$ \\
4 & $0.8149617619\ 10^{-2}$ \\
5 & $0.6836500666\ 10^{-1}$ \\
6 & $0.2410330694$ \\
7 & $0.3725933960$ \\
8 & $0.2452710131$ \\
9 & $0.6041135610\ 10^{-1}$ \\
10 & $0.3809098858\ 10^{-2}$ \\
11 & $6.755525278\ 10^{-6}$ \\
12 & $-5.189883715\ 10^{-6}$ \\
13 & $3.848392520\ 10^{-6}$ \\
14 & $-2.434636570\ 10^{-6}$ \\
15 & $1.261797315\ 10^{-6}$ \\
\hline
\end{tabular} \quad
\begin{tabular}{|l|l|}
\hline
$k$ & $\lambda_{k,30}^{(1)}$ \\
\hline
16 & $-5.203778435\ 10^{-7}$ \\
17 & $1.650403141\ 10^{-7}$ \\
18 & $-3.822820686\ 10^{-8}$ \\
19 & $5.890634594\ 10^{-9}$  \\
20 & $-4.840551012\ 10^{-10}$ \\
21 & $1.105332527\ 10^{-11}$ \\
22 & $-7.562667367\ 10^{-12}$ \\
23 & $3.793214538\ 10^{-12}$ \\
24 & $-1.400104912\ 10^{-12}$ \\
25 & $3.767415857\ 10^{-13}$ \\
26 & $-7.193039657\ 10^{-14}$ \\
27 & $9.260146442\ 10^{-15}$ \\
28 & $-7.331498520\ 10^{-16}$ \\
29 & $2.977117925\ 10^{-17}$ \\
30 & $-3.903292274\ 10^{-19}$ \\
\hline
\end{tabular}
\label{example}
\end{table}

\begin{verbatim}
Walter Van Assche, Anton Vuerinckx
Department of Mathematics
KU Leuven
Celestijnenlaan 200B box 2400
BE-3001 Leuven
BELGIUM
walter.vanassche@kuleuven.be
\end{verbatim}
\end{document}